\documentclass[11pt,letterpaper]{article}
\usepackage[T1]{fontenc}
\usepackage[letterpaper,margin=1in]{geometry}
\usepackage{lmodern,amsmath,amssymb,amsthm,mathtools,booktabs,microtype}
\usepackage[colorlinks=true,allcolors=blue]{hyperref}
\hypersetup{pdftitle={Uniform adjacency-degree probes from differential rigidity},pdfauthor={Zhipeng Lu}}
\newtheorem{theorem}{Theorem}[section]
\newtheorem{lemma}[theorem]{Lemma}
\newtheorem{proposition}[theorem]{Proposition}
\newtheorem{corollary}[theorem]{Corollary}
\theoremstyle{definition}
\theoremstyle{remark}\newtheorem{remark}[theorem]{Remark}
\DeclareMathOperator{\spanop}{span}\DeclareMathOperator{\im}{im}
\DeclareMathOperator{\tr}{tr}
\DeclareMathOperator{\diam}{diam}\DeclareMathOperator{\Spec}{Spec}
\DeclareMathOperator{\adj}{adj}
\newcommand{\R}{\mathbb R}\newcommand{\Z}{\mathbb Z}
\newcommand{\one}{\mathbf1}\newcommand{\M}{\mathcal M}
\title{Uniform adjacency--degree probes from differential rigidity}
\author{Zhipeng Lu\\[0.4em]
Shenzhen MSU--BIT University and\\
Guangdong Laboratory of Machine Perception and Intelligent Computing\\
Shenzhen 518172, Guangdong, China\\
\texttt{zhipeng.lu@hotmail.com}}
\date{}
\begin{document}
\hypersetup{pageanchor=false}
\begin{titlepage}
\maketitle
\begin{abstract}
\normalsize
We construct a fixed triangular family of noncommutative polynomials that spans the joint cyclic module of a real symmetric pair from any prescribed vector. In dimension at most $N$, when the second matrix has spectral diameter at most $N-1$, the family has $N(N+1)/2$ members, degree at most $N-1$, and a shared arithmetic circuit of size $O(N^2)$. No rank-dependent choice of coefficients is made. The proof uses a rational differential connection with a small self-adjoint residue: a primitive polynomial exterior vector for an invariant subspace must have an integer degree smaller than one.

For connected trees, the probes and the pointed adjacency--degree reconstruction theorem give one explicit self-adjoint polynomial $p_N\in\Z\langle a,d\rangle$ whose trace separates all trees of order at most $N$. Its circuit has $O(N^2)$ gates, formal degree at most $3N-2$, and polynomial bit complexity. Only the original adjacency and degree matrices, the identity, and ordinary matrix arithmetic are used. A separate integer sum-of-squares construction gives a fixed positive semidefinite representative on which the all-one vector is cyclic; its degree is $O(N^2)$. A linear trace-degree lower bound shows that the first construction has the correct asymptotic degree scale.
\end{abstract}
\thispagestyle{empty}
\end{titlepage}
\setcounter{page}{1}
\hypersetup{pageanchor=true}

\section{Introduction}
Finite-dimensionality tells us that a cyclic module has a small basis. It does not tell
us which vectors to choose before its generators are given. For matrices $H,K$ and a
vector $v$, consider the span of all word responses $w(H,K)v$. If its dimension is
$r>0$, words of length at most $r-1$ already span it. Listing them gives exponentially
many candidates; selecting a basis after evaluation makes the choice depend on the
input. Can a small family of fixed polynomial expressions replace both the full word
list and the input-dependent selection?

For real symmetric pairs, we give such a family under a bound on the spectral diameter
of $K$. In dimension at most $N$, with $\lambda_{\max}(K)-\lambda_{\min}(K)\le N-1$, an
integer triangular recurrence produces $N(N+1)/2$ polynomials of degree at most $N-1$,
computed by a shared $O(N^2)$ arithmetic circuit. Their responses span the joint cyclic
module from every prescribed vector, including when the module is proper. The first
$r$ rows already suffice when that module has dimension $r$. The degree $N-1$ is optimal
in the worst case: replacing basis selection by a fixed frame requires no increase in degree.

The key observation is that a suitable rational differential subspace cannot move. Set
$B=K/N^2$ and introduce the connection $\nabla=\partial_x+B/x+H$. For a nonzero seed
$v$, the rational span of $v,\nabla v,\nabla^2v,\ldots$ stabilizes after at most $N$
vectors and is invariant under $\nabla$. A primitive polynomial exterior vector
represents this subspace. If the subspace were nonconstant, that vector would have
positive integer degree. Self-adjointness bounds this degree by at most $N$ times the spectral diameter of $B$,
which is strictly smaller than one after scaling $K$. The subspace is
therefore constant. Invariance under the connection now forces invariance under both
matrices, so its constant part is the full joint cyclic module.

The algorithm follows by taking coefficients. Multiplying the $i$th iterate by
$N^{2i}x^i$ makes it a polynomial vector. The coefficients of the first $N$ such vectors
form our fixed frame. Differentiation supplies the lower-degree terms that distinguish
multiplication orders, but the resulting circuit uses only ordinary integer-coefficient
matrix arithmetic.

A fixed frame also gives polynomial coordinates for pointed self-adjoint
representations. Its Gram table and the two tables obtained by inserting $H$ or $K$
determine every word moment, and hence the representation on the cyclic module up to a
seed-preserving isometry. The same coordinates remain valid as the module's dimension
changes, without selecting a new basis.

Input-dependent basis exploration appears in equivalence algorithms for probabilistic
automata \cite{Tzeng}, and fixed word families span full matrix algebras on generic inputs
\cite{KlepSpenko,JiaCapel}. Here a single polynomial frame gives complete pointed
coordinates for every seed throughout the stated self-adjoint class. The degree-integrality
mechanism has precedents in small-exponent differential systems \cite{GV,BG};
Section~8 gives the detailed comparisons.

Trees give a concrete application. Adjacency spectra alone are insufficient: almost all
trees have a nonisomorphic cospectral mate \cite{Schwenk}. McKay showed that the full
adjacency--degree representation determines a tree \cite{McKay}, and the pointed
reconstruction theorem shows that the moments $\one^T w(A_T,D_T)\one$ suffice
\cite{LuLi}. Our frame replaces these moments by three finite tables. McKay's distance
polynomials convert the entries into traces; a factored balanced-radix encoding then
combines them into one self-adjoint polynomial $p_N\in\Z\langle a,d\rangle$. Its trace
separates all connected trees of order at most $N$, with $O(N^2)$ gates, degree at most
$3N-2$, and polynomial bit complexity. Every gate acts on the original matrix space, and
only the final trace is read. The factorization preserves the quadratic gate bound even
though the tables have $O(N^4)$ entries.

The question here is the cost of an algebraic representation, rather than unrestricted
tree canonization, which already has logarithmic-space algorithms \cite{Lindell,ADFK}.
Explicit pairs of nonisomorphic spiders show that any separating trace polynomial needs
degree $\Omega(N)$, matching the order of our upper bound. A second construction turns
the frame into an integer sum-of-squares matrix polynomial of degree $O(N^2)$, with
polynomial size and bit complexity. On each tree, its image is the principal module, and
powers of this single positive semidefinite operator applied to $\one$ generate that
module.

AI assistance with supporting theory, proof checking, the literature review, drafting
and editing, and Lean formalization is described in the methodology statement.

\section{The fixed construction}
For real symmetric matrices $H,K\in M_n(\R)$ and $v\in\R^n$, put
\[
\M(H,K;v)=\spanop_{\R}\{w(H,K)v:w\in\{h,k\}^{*}\}.
\]
The empty word is included. We work with straight-line matrix circuits: the inputs are $I,H,K$, and the gates are addition, ordinary matrix multiplication and multiplication by prescribed scalar constants. Reversing the order of the factors compiles the formal involution $p\mapsto p^*$; it is not an additional operation on the input.

Fix $N\ge1$ and set $\kappa=N^2$. Define polynomials $U_{k,j}\in\Z\langle h,k_0\rangle$ by
\begin{equation}\label{eq:triangle}
U_{0,0}=1,\qquad
U_{k+1,j}=\bigl(k_0+\kappa(j-k)\bigr)U_{k,j}
                 +\kappa hU_{k,j-1},
\end{equation}
where an index outside $0\le j\le k$ denotes zero. We generate rows $0,\ldots,N-1$. The symbol $k_0$ denotes the second noncommuting generator, not the row index.

\begin{theorem}[Uniform spanning probes]\label{thm:probes}
Let $1\le n\le N$, let $H,K\in M_n(\R)$ be symmetric, and suppose
\[
\lambda_{\max}(K)-\lambda_{\min}(K)\le N-1.
\]
Then, for every $v\in\R^n$,
\begin{equation}\label{eq:span}
\spanop\{U_{k,j}(H,K)v:0\le j\le k<N\}=\M(H,K;v).
\end{equation}
The family in \eqref{eq:triangle} is constructed from $N$ alone. It has $N(N+1)/2$ members, shared circuit size $O(N^2)$, and formal degree at most $N-1$. Its scalar constants have $O(\log(N+1))$ bits.
\end{theorem}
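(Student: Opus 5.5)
The proof realizes the coefficients $U_{k,j}$ as the coefficients of the iterates of the connection $\nabla=\partial_x+B/x+H$ with $B=K/\kappa$, acting on $\R(x)^n$. The inclusion ``$\subseteq$'' in \eqref{eq:span} is immediate, since each $U_{k,j}(H,K)v$ is a combination of words applied to $v$. The complexity claims are routine and I would dispatch them first:
\begin{itemize}
\item each $U_{k+1,j}$ costs one multiplication by $K$, one by $H$, a scalar multiple and two additions of previously computed terms, so rows $0,\ldots,N-1$ use $O(N^2)$ gates;
\item row $k$ has degree $k\le N-1$;
\item the constants $\kappa(j-k)$ and $\kappa$ are bounded by $N^3$ in absolute value, hence have $O(\log(N+1))$ bits.
\end{itemize}

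\textbf{Step 1 (generating identity).} Put $W_i=\kappa^i x^i\nabla^i v$ and prove by induction that
\[
W_i=\sum_j x^j\,U_{i,j}(H,K)v .
\]
Apply $\nabla$ to $\kappa^{-i}x^{-i}W_i$ and use $\partial_x x^{j-i}=(j-i)x^{j-i-1}$. Multiplying the result by $\kappa^{i+1}x^{i+1}$ gives
\[
\sum_j x^j\bigl(K+\kappa(j-i)\bigr)U_{i,j}v+\sum_j\kappa x^{j+1}HU_{i,j}v ,
\]
which is exactly \eqref{eq:triangle}.

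\textbf{Step 2 (the rational span and the reduction to constancy).} Let $V\subseteq\R(x)^n$ be the $\R(x)$-span of $v,\nabla v,\nabla^2v,\ldots$. Since $\nabla$ is additive and satisfies the Leibniz rule, the chain of spans of the first $i$ iterates is increasing. Once it stops growing it is $\nabla$-invariant. So it stabilizes at some dimension $r\le n\le N$, and $V$ is spanned by $W_0,\ldots,W_{r-1}$. Every iterate lies in $\M\otimes\R(x)$, where $\M=\M(H,K;v)$, so $V\subseteq\M\otimes\R(x)$.

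Suppose we know that $V$ is constant, i.e.\ $V=V_0\otimes\R(x)$ for a real subspace $V_0$. Then for $u\in V_0$ we have $\nabla u=Bu/x+Hu\in V$, and comparing coefficients gives $Ku,Hu\in V_0$. Since also $v\in V_0$, this yields $\M\subseteq V_0$, and hence $V_0=\M$.

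Let $S$ be the real span of all coefficients of $W_0,\ldots,W_{N-1}$; these coefficients are exactly the vectors $U_{k,j}(H,K)v$. Then $V\subseteq S\otimes\R(x)$, so $\M=V_0\subseteq S$. This gives ``$\supseteq$''. It also shows that the first $r$ rows already suffice.

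\textbf{Step 3 (constancy, the main obstacle).} Let $P(x)$ be the primitive polynomial representative of $W_0\wedge\cdots\wedge W_{r-1}$ in $\Lambda^r\R^n[x]$, so the gcd of its coordinates is $1$ and $P(0)\neq0$. Invariance of $V$ gives
\[
P'+\tfrac1x B^{(r)}P+H^{(r)}P=c(x)P
\]
for a rational function $c$. Here $B^{(r)}$ and $H^{(r)}$ are the derivation extensions of $B$ and $H$ to $\Lambda^r$; they are symmetric for the standard inner product. Because $P$ is primitive, the vector $xc(x)P$ being polynomial forces $xc$ to be a polynomial. Comparing degrees then forces $c=\alpha/x+\beta$.

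Let $d=\deg P$ and write $P=\sum_{m=0}^d P_m x^m$. We compare coefficients:
\begin{itemize}
\item at $x^{-1}$: $B^{(r)}P_0=\alpha P_0$;
\item at $x^{d}$: $H^{(r)}P_d=\beta P_d$;
\item at $x^{d-1}$: $dP_d+B^{(r)}P_d+H^{(r)}P_{d-1}=\alpha P_d+\beta P_{d-1}$.
\end{itemize}
Pair the last relation with $P_d$. Symmetry of $H^{(r)}$ gives $\langle H^{(r)}P_{d-1},P_d\rangle=\langle P_{d-1},H^{(r)}P_d\rangle=\beta\langle P_{d-1},P_d\rangle$, so the $H$ and $\beta$ terms cancel. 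What remains is
\[
d=\alpha-\frac{\langle B^{(r)}P_d,P_d\rangle}{\|P_d\|^2}.
\]
Both $\alpha$ and this Rayleigh quotient lie in the spectral interval of $B^{(r)}$, whose diameter is at most $r\,(\lambda_{\max}(B)-\lambda_{\min}(B))\le N(N-1)/N^2<1$. Since $d$ is a nonnegative integer, $d=0$. Then $P$ is constant, $V$ is constant, and Step 2 completes the proof.

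The delicate points are the primitivity argument pinning $c$ down to the form $\alpha/x+\beta$, and checking that the derivation extensions remain symmetric with spectral diameter at most $r$ times that of $B$.
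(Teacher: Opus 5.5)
Your proposal is correct and follows the paper's proof essentially step for step. You use the same differential Krylov chain for $\nabla=\partial_x+K/(\kappa x)+H$, whose rescaled iterates $\kappa^i x^i\nabla^i v$ have the rows of \eqref{eq:triangle} as coefficients, and the same rigidity argument: a primitive polynomial exterior vector, primitivity forcing $x\,c(x)$ to be linear, and the Rayleigh-quotient pairing at the top coefficient that bounds the integer degree by $r\,\diam\Spec(B)<1$ (the differences are only that you place the $H,K$-invariance step in Step~2 rather than in a separate lemma, and you leave implicit the routine fact that a constant decomposable exterior vector cuts out a real subspace).
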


\begin{corollary}[Rank-sensitive prefix]\label{cor:rankprefix}
Under the hypotheses of Theorem~\ref{thm:probes}, if $r=\dim\M(H,K;v)$, then the rows $0\le k<r$ already span $\M(H,K;v)$. For $r=0$ this is the empty span. The circuit still generates all $N$ rows and makes no rank test.
\end{corollary}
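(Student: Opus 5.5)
The plan is to recover the corollary from the differential mechanism behind Theorem~\ref{thm:probes}, reading rows of the triangle as coefficient vectors of iterates of the connection $\nabla=\partial_x+B/x+H$ with $B=K/\kappa$. First I identify these polynomial vectors: put $P_k(x)=\kappa^k x^k\nabla^k v$. From
\[
P_{k+1}=\kappa x^{k+1}\nabla\bigl(x^{-k}P_k\bigr)=\kappa xP_k'-\kappa kP_k+KP_k+\kappa xHP_k ,
\]
comparing coefficients of $x^j$ shows by induction on $k$ that $P_k(x)=\sum_{j=0}^k x^jU_{k,j}(H,K)v$. Here I use that the coefficient of $x^j$ on the right is $(K+\kappa(j-k))U_{k,j}+\kappa HU_{k,j-1}$, which is exactly \eqref{eq:triangle} evaluated with $k_0\mapsto K$ and $h\mapsto H$. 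So row $k$ of the triangle consists of the coefficients of $P_k$. Each $U_{k,j}$ is an integer combination of words in $h,k_0$, so every coefficient lies in $\M=\M(H,K;v)$.

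If $r=0$ then $v=0$, every response vanishes, and the claim is the empty span. Assume $r\ge1$ and let $V_m=\spanop_{\R(x)}\{v,\nabla v,\dots,\nabla^{m-1}v\}$. The Leibniz rule $\nabla(fw)=f'w+f\nabla w$ shows that if $\nabla^m v\in V_m$ then $\nabla V_m\subseteq V_m$, so $V_{m'}=V_m$ for all $m'\ge m$. Hence $\dim V_m$ increases strictly until it stabilizes, and the stable space is the full rational Krylov space $V_\infty$. Next I invoke the core step of the proof of Theorem~\ref{thm:probes}: under the spectral-diameter hypothesis, $V_\infty$ is constant and its constant part is $\M$, so $V_\infty=\M\otimes\R(x)$ has dimension $r$. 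Since the dimensions are strictly increasing before stabilization, $\dim V_r=r$. Thus $v,\nabla v,\dots,\nabla^{r-1}v$, and equivalently $P_0,\dots,P_{r-1}$ (which differ only by nonzero rational scalars), span $\M\otimes\R(x)$ over $\R(x)$.

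Now let $S\subseteq\M$ be the real span of all coefficients of $P_0,\dots,P_{r-1}$, that is, of $\{U_{k,j}(H,K)v: 0\le j\le k<r\}$. Each $P_k$ lies in $S\otimes\R[x]$, so $V_r\subseteq S\otimes\R(x)$. This gives $r=\dim V_r\le\dim S\le\dim\M=r$, and therefore $S=\M$. This is the claimed prefix statement. The circuit is the one from Theorem~\ref{thm:probes}, unchanged, so it still produces all $N$ rows and performs no rank test.

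The main obstacle is the rigidity input, namely that $V_\infty$ equals $\M\otimes\R(x)$, both constant and of the right dimension. I take this from the proof of Theorem~\ref{thm:probes} (the degree-integrality argument with the small self-adjoint residue) rather than reproving it. If that proof only records the conclusion \eqref{eq:span}, I would instead re-extract the intermediate statement as follows. The theorem gives $S_N=\M$, where $S_N$ is the coefficient span of all $N$ rows, hence $\M\otimes\R(x)\supseteq V_N=V_\infty$. For the reverse inclusion, $V_\infty$ is constant, so it is spanned by constant vectors lying in $\M$. Once constancy is known, invariance under $\nabla$ forces invariance under $H$ and $K$, and $v\in V_\infty$, so $V_\infty\supseteq\M\otimes\R(x)$.
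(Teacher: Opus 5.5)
Your proposal is correct and follows the paper's argument. The paper likewise observes that the differential Krylov chain $q_i=\nabla^i v$ first becomes dependent exactly at $\dim V=r$, by \eqref{eq:diffspan}, so $q_0,\dots,q_{r-1}$ span $V=\M\otimes\R(x)$ and the coefficients of $f_0,\dots,f_{r-1}$ span $\M$. Your identification $P_k=\kappa^k x^k\nabla^k v$ and the squeeze $r=\dim V_r\le\dim S\le r$ just write out steps the paper leaves implicit.
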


The use of polynomial circuits, rather than a small list of individual words, matters. For example, the first mixed probes are
\[
U_{2,1}=\kappa(k_0h+hk_0),
\]
\[
U_{3,1}=\kappa\bigl(k_0^2h+k_0hk_0+hk_0^2
                    -\kappa k_0h-2\kappa hk_0\bigr).
\]
Their lower-degree terms distinguish multiplication orders. There is no assertion that the polynomials have polynomially many monomials after expansion.

For a connected tree $T$, let $A_T,D_T$ be its adjacency and degree matrices. The following is the principal application.

\begin{theorem}[A uniform linear-degree tree invariant]\label{thm:tree}
For every $N\ge1$, a deterministic algorithm running in time polynomial in $N$ constructs a self-adjoint polynomial $p_N\in\Z\langle a,d\rangle$, represented by an arithmetic circuit, such that
\[
T\cong T'
\quad\Longleftrightarrow\quad
\tr p_N(A_T,D_T)=\tr p_N(A_{T'},D_{T'})
\]
for all connected trees $T,T'$ of orders at most $N$. The circuit has $O(N^2)$ gates and formal degree at most $3N-2$. Its scalar constants have $O(N\log(N+1))$ bits. All intermediate integers and the final trace have $O(N^5\log(N+1))$ bits on these inputs.

The circuit operates on matrices of the original input order. It uses no internal trace, rank query, matrix inverse, entry query, Hadamard product, or input-dependent choice. Only the trace of its final output is read.
\end{theorem}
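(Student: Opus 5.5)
The plan is to assemble $p_N$ from the probes of Theorem~\ref{thm:probes} with $H=A_T$ and $K=D_T$, seeded at $v=\one$. The combination uses a polynomial expression for the all-one matrix $J$ and a radix encoding that merges many bilinear quantities into one trace. Theorem~\ref{thm:probes} applies because for a tree of order $n\le N$ we have $1\le\deg\le n-1$ (or $D=0$ when $n=1$), so $\lambda_{\max}(D_T)-\lambda_{\min}(D_T)\le N-2\le N-1$. Hence the vectors $U_{k,j}(A_T,D_T)\one$ span $\M(A_T,D_T;\one)$.

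I would then define three tables indexed by pairs $\alpha=(k,j)$, $\beta=(k',j')$ of probe indices:
\[
G_{\alpha\beta}=\one^TU_\alpha^*U_\beta\one,\qquad G^{A}_{\alpha\beta}=\one^TU_\alpha^*A_TU_\beta\one,\qquad G^{D}_{\alpha\beta}=\one^TU_\alpha^*D_TU_\beta\one.
\]
Since the probe vectors span the cyclic module, these three tables determine the pointed representation on $\M$ up to a seed-preserving isometry. Concretely, write each $A_TU_\beta\one$ and $D_TU_\beta\one$ in terms of a spanning set, using the Gram data; this recovers all moments $\one^Tw(A_T,D_T)\one$. The pointed reconstruction theorem of \cite{LuLi} then recovers $T$. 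The order is also read off, since $G_{00,00}=\one^T\one=n$, so trees of different orders are automatically separated. This gives the nontrivial direction: equal tables imply $T\cong T'$. The converse is trivial because $p_N(A_T,D_T)$ changes only by a permutation conjugation under isomorphism.

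Next I would convert entries into traces. For trees, McKay's distance polynomials satisfy $P_0=I$, $P_1=a$, $P_2=a^2-d$, and $P_{k+1}=aP_k-(d-I)P_{k-1}$ for $k\ge2$. Here $P_k(A_T,D_T)$ is the $0/1$ matrix of pairs at distance exactly $k$. Hence
\[
J=\sum_{k=0}^{N-1}P_k(A_T,D_T)
\]
has degree $N-1$ and needs $O(N)$ gates, and each table entry equals $\tr\bigl(J\,U_\alpha^*XU_\beta\bigr)$ with $X\in\{I,A,D\}$. The degree is $(N-1)+(N-1)+1+(N-1)=3N-2$.

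To merge the entries without $O(N^4)$ gates, I would fix a radix $R$ exceeding twice an a priori bound on all entries. The bound is of the form $|U_\alpha(A,D)|\le (2\kappa N)^{O(N)}$ entrywise, so $\log R=O(N\log N)$. Enumerate the $M=N(N+1)/2$ probes and form
\[
V=\sum_\alpha R^{\iota(\alpha)}U_\alpha,\qquad W=\sum_\beta R^{M\iota(\beta)}U_\beta,
\]
both computable on top of the shared $O(N^2)$ circuit. Then set $q=J\bigl(V^*W+R^{M^2}V^*aW+R^{2M^2}V^*dW\bigr)$ and $p_N=\tfrac12(q+q^*)$. Since $J$ and the inputs are symmetric, $\tr q^*=\tr q$; I would in fact take $p_N=q+q^*$ to stay in $\Z\langle a,d\rangle$ and halve the trace afterwards. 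The exponents $\iota(\alpha)+M\iota(\beta)+sM^2$ are pairwise distinct. So $\tr p_N$ is a balanced-radix integer whose digits are the table entries, and equal traces force equal tables. The bit sizes follow: constants are powers $R^{O(N^2)}$, which would violate the $O(N\log N)$ constant bound. I would therefore realize the large weights via repeated multiplication by small constants, e.g.\ Horner-style accumulation $V\leftarrow RV+U_\alpha$, and the same for $W$ with $R^M$ built through $O(\log M)$ squarings. This keeps each scalar to $O(N\log N)$ bits, while intermediate integers reach $O(M^2\log R)=O(N^5\log N)$ bits.

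The main obstacle I expect is the middle step: showing rigorously that the three tables determine every word moment uniformly, including when $\dim\M<M$ and the probe vectors are dependent. I would handle it via a Gram pseudoinverse argument. The operators on $\M$ are determined by their inner products with a spanning set, and words are evaluated inductively as $\one^T w\,X U_\beta\one$ using expansions in the span. The remaining work is the explicit entry bound that fixes $R$.
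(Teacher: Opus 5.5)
Your proposal is correct and follows essentially the paper's route: probes seeded at $\one$, completeness of the three Gram tables via Lemma~\ref{lem:gram} and the pointed reconstruction theorem, McKay's distance polynomials giving $J_N(A_T,D_T)=\one\one^{\mathsf T}$, and a balanced-radix packing $J_NV^*(1+R^{M^2}a+R^{2M^2}d)W$ plus its adjoint, which is the paper's factored identity \eqref{eq:factored} with the roles of the two Horner sums swapped. The remaining differences are cosmetic: you build the large scalar powers by repeated squaring instead of $M$ successive multiplications, and you do not single out $N=1$, where your uniform construction already has degree $1=3N-2$.
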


The graph-theoretic input in Theorem~\ref{thm:tree} is the pointed adjacency--degree reconstruction theorem for trees, stated explicitly in Section~\ref{sec:reconstruction}. The new spanning argument below does not assume a tree, an orbit partition, a simple spectrum, or a cyclic Gram matrix.

\section{A rational differential subspace cannot move}
We first prove the linear-algebraic statement responsible for uniformity. The differential language is used in the proof only; the circuit \eqref{eq:triangle} performs no differentiation or differential-equation solving.

For a linear map $B$ on $\R^n$, denote its additive action on the $r$th exterior power by
\[
B^{[r]}(u_1\wedge\cdots\wedge u_r)
=\sum_{i=1}^r u_1\wedge\cdots\wedge Bu_i\wedge\cdots\wedge u_r.
\]
This is not the multiplicative exterior power of $B$. If $B$ is symmetric, then $B^{[r]}$ is symmetric and its eigenvalues are sums of $r$ eigenvalues of $B$ with distinct indices. In particular,
\begin{equation}\label{eq:diam}
\diam\Spec(B^{[r]})\le r\,\diam\Spec(B).
\end{equation}

\begin{lemma}[Small-residue rigidity]\label{lem:rigidity}
Let $B,H\in M_n(\R)$ be symmetric. Let $V\subseteq\R(x)^n$ be a subspace of dimension $r$ invariant under
\[
\nabla f=f'+(B/x+H)f.
\]
If $r\,\diam\Spec(B)<1$, then $V=W\otimes_{\R}\R(x)$ for a constant real subspace $W\subseteq\R^n$. Moreover, $W$ is invariant under both $B$ and $H$.
\end{lemma}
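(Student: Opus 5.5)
The plan is to encode $V$ by a primitive polynomial Plücker vector and show by a degree count that this vector must be constant. Choose a basis $f_1,\dots,f_r$ of $V$. After clearing denominators and dividing out the gcd of the coordinates, $f_1\wedge\cdots\wedge f_r$ becomes a primitive vector $\omega\in\Lambda^r\R[x]^n$, determined by $V$ up to a nonzero real scalar. Since $\nabla$ is a derivation-type operator, it induces
\[
\nabla^{[r]}=\partial_x+B^{[r]}/x+H^{[r]}
\]
on $\Lambda^r\R(x)^n$. By the Leibniz rule, $\nabla^{[r]}(f_1\wedge\cdots\wedge f_r)=\sum_i f_1\wedge\cdots\wedge\nabla f_i\wedge\cdots\wedge f_r$. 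Each $\nabla f_i$ lies in $V$, so this sum is a rational multiple of $f_1\wedge\cdots\wedge f_r$. Rescaling by a rational function changes the multiplier only by a logarithmic derivative. Hence
\[
\omega'+\bigl(B^{[r]}/x+H^{[r]}\bigr)\omega=g\,\omega,\qquad g\in\R(x).
\]

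Next I would pin down $g$. Multiplying by $x$ gives $x\omega'+B^{[r]}\omega+xH^{[r]}\omega=xg\,\omega$, where the left side is a polynomial vector. Since $\omega$ is primitive, its coordinates have no common root, so $xg$ is a polynomial. Write $xg=c+xq(x)$. If $\deg q\ge1$, then $g\omega$ has degree larger than $d=\deg\omega$, while every term on the left has degree at most $d$. This is impossible, so $q=q_0$ is a constant.

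Primitivity also gives $\omega(0)\ne0$. Evaluating the multiplied equation at $x=0$ gives $B^{[r]}\omega(0)=c\,\omega(0)$, so $c$ is an eigenvalue of $B^{[r]}$. Write $\omega=\omega_dx^d+\omega_{d-1}x^{d-1}+\cdots$ with $\omega_d\ne0$. The coefficient of $x^{d+1}$ gives $H^{[r]}\omega_d=q_0\omega_d$. The coefficient of $x^d$ gives
\[
d\,\omega_d+B^{[r]}\omega_d+H^{[r]}\omega_{d-1}=c\,\omega_d+q_0\omega_{d-1}.
\]

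The key step is to eliminate the $H$-contribution using self-adjointness. Pair the last identity with $\omega_d$ in the standard inner product on $\Lambda^r\R^n$. Since $H^{[r]}$ is symmetric, $\langle H^{[r]}\omega_{d-1},\omega_d\rangle=\langle\omega_{d-1},H^{[r]}\omega_d\rangle=q_0\langle\omega_{d-1},\omega_d\rangle$, which cancels the $q_0$ term. What remains is
\[
d=c-\frac{\langle B^{[r]}\omega_d,\omega_d\rangle}{|\omega_d|^2}.
\]
This is the difference between an eigenvalue of $B^{[r]}$ and a Rayleigh quotient of $B^{[r]}$, so by \eqref{eq:diam} it is at most $\diam\Spec(B^{[r]})\le r\,\diam\Spec(B)<1$. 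As $d$ is a nonnegative integer, $d=0$. Then $\omega$ is a constant decomposable vector, representing a constant subspace $W\subseteq\R^n$ with $V=W\otimes_\R\R(x)$.

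For the invariance claim, take $w\in W$. Then $\nabla w=Bw/x+Hw$ lies in $W\otimes\R(x)$. Let $P$ be the orthogonal projection onto $W^\perp$, a constant matrix. Then $PBw/x+PHw=0$ as a rational vector, and comparing the $1/x$ and constant parts gives $PBw=PHw=0$. Thus $W$ is invariant under both $B$ and $H$.

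I expect the main obstacle to be the bookkeeping in setting up the Plücker eigen-equation correctly. That means checking that the induced connection on $\Lambda^r$ has exactly the additive form $B^{[r]},H^{[r]}$, and that primitivity really forces both $\omega(0)\ne0$ and polynomiality of $xg$. The cancellation of the $H$-term via symmetry is the delicate point, and it is where self-adjointness of $H$ is essential.
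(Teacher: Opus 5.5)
Your proposal is correct and matches the paper's proof essentially step for step. Both arguments use a primitive polynomial exterior vector for $V$, show the multiplier $xg$ is linear, read eigen-equations off the end coefficients, cancel the $H^{[r]}$ term by symmetry so that the integer degree is at most $\diam\Spec(B^{[r]})<1$, and finish by comparing the $1/x$ and constant parts; the only cosmetic difference is that you project orthogonally onto $W^\perp$ where the paper passes to $\R^n/W$.
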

\begin{proof}
The cases $r=0$ and $r=n$ are immediate, although the argument also covers the latter. Choose a rational basis of $V$ and take its exterior product. Clearing denominators and dividing the coordinates by their polynomial gcd gives a nonzero polynomial vector
\[
w(x)=w_0+w_1x+\cdots+w_\ell x^\ell
       \in\bigwedge^r\R^n[x]
\]
whose coordinates have gcd one. Consequently $w_0\ne0$ and $w_\ell\ne0$.

The line spanned by $w$ is invariant under the induced connection. Thus, for some $c(x)\in\R(x)$,
\[
xw'+(B^{[r]}+xH^{[r]})w=c(x)w.
\]
The left side is polynomial. Since the coordinates of $w$ are coprime, a polynomial Bezout identity shows that $c(x)$ is polynomial. Comparing degrees gives $\deg c\le1$. Write $c(x)=\alpha+\beta x$.

The constant and highest coefficients give
\begin{equation}\label{eq:endcoeff}
B^{[r]}w_0=\alpha w_0,
\qquad H^{[r]}w_\ell=\beta w_\ell.
\end{equation}
If $\ell>0$, the coefficient of $x^\ell$ is
\[
\ell w_\ell+B^{[r]}w_\ell+H^{[r]}w_{\ell-1}
=\alpha w_\ell+\beta w_{\ell-1}.
\]
Take its inner product with $w_\ell$. The last terms cancel by symmetry and \eqref{eq:endcoeff}, so
\begin{equation}\label{eq:integergap}
\ell=\alpha-
\frac{\langle w_\ell,B^{[r]}w_\ell\rangle}{\|w_\ell\|^2}
\le\diam\Spec(B^{[r]})<1.
\end{equation}
This contradicts the positive integer $\ell$. Hence $\ell=0$.

A constant nonzero decomposable exterior vector determines a constant subspace. Explicitly, $V$ is the set of rational vectors $u$ satisfying $u\wedge w_0=0$, and these are the rational extension of a real subspace $W$. For $u\in W$, invariance gives $(B/x+H)u\in W\otimes\R(x)$. Passing to $\R^n/W$ and equating the coefficients of $1,x$ shows $Bu,Hu\in W$.
\end{proof}

\begin{remark}
The strict inequality is meaningful. With $B=\operatorname{diag}(0,-1)$ and $H=0$, the nonconstant rational line spanned by $(1,x)^{\mathsf T}$ is horizontal for $\partial_x+B/x$. Its primitive polynomial degree is one, equal to the spectral diameter. The proof is a small-exponent argument in the elementary language of polynomial exterior vectors; related constant-reduction phenomena occur in the differential-equation literature \cite{vdPS,GV}. No exterior-power matrix is formed by our algorithm.
\end{remark}

\section{From the connection to the triangular recurrence}
\begin{proof}[Proof of Theorem~\ref{thm:probes}]
If $v=0$, both sides of \eqref{eq:span} vanish. Otherwise put $B=K/\kappa$ and
\[
q_0=v,\qquad q_{i+1}=\nabla q_i,
\qquad \nabla=\partial_x+B/x+H.
\]
Let $V$ be the $\R(x)$-span of all $q_i$. A differential Krylov chain stabilizes as soon as its next vector is dependent: differentiating a rational linear relation introduces only derivatives of its rational coefficients and the next Krylov vectors. Thus $V$ is invariant under $\nabla$, and it is spanned by $q_0,\ldots,q_{n-1}$.

Every coefficient of every $q_i$ lies in the constant joint cyclic module $\M=\M(H,K;v)$. Thus $V\subseteq\M\otimes\R(x)$. Conversely, if $r=\dim_{\R(x)}V$, then
\[
r\,\diam\Spec(B)\le\frac{N(N-1)}{N^2}<1.
\]
Lemma~\ref{lem:rigidity} makes $V$ a constant subspace invariant under $H,K$ and containing $v$. Hence
\begin{equation}\label{eq:diffspan}
V=\M\otimes\R(x).
\end{equation}

Set $f_i=x^i q_i$. The recurrence becomes
\[
f_{i+1}=xf_i'+(B-iI+xH)f_i.
\]
Induction shows that $f_i$ is polynomial of degree at most $i$. Writing
\[
f_i(x)=\kappa^{-i}\sum_{j=0}^i U_{i,j}(H,K)v\,x^j
\]
gives exactly \eqref{eq:triangle}. Let $S$ be the real span of the coefficients of $f_0,\ldots,f_{N-1}$. Then $S\subseteq\M$, while \eqref{eq:diffspan} gives $\M\otimes\R(x)\subseteq S\otimes\R(x)$. Therefore $S=\M$.

The same argument proves Corollary~\ref{cor:rankprefix}: the first dependent differential Krylov vector occurs at the dimension of $V$, which by \eqref{eq:diffspan} equals $r$. Thus $q_0,\ldots,q_{r-1}$ suffice, and so do the coefficients of $f_0,\ldots,f_{r-1}$.

Each row of \eqref{eq:triangle} has one more entry than the preceding row and uses a constant number of gates per entry. Each step increases formal degree by at most one. The stated size, degree and scalar bounds follow.
\end{proof}

\begin{remark}[What was made explicit]
A generic linear combination of a known spanning family is not being chosen. The entire family is prescribed by \eqref{eq:triangle}, and its spanning property follows before any evaluation. In particular there is no hidden hitting-set assumption and no need to certify a particular parameter sequence by inspecting the input.
\end{remark}

\begin{proposition}[The probe degree is necessary]\label{prop:probedegree}
For every $N\ge1$, there is a symmetric pair satisfying Theorem~\ref{thm:probes} for which every polynomial family spanning the cyclic module has a member of degree at least $N-1$.
\end{proposition}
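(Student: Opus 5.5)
I will exhibit a path-type example in dimension $n=N$. Take $K=0$, which trivially satisfies $\lambda_{\max}(K)-\lambda_{\min}(K)=0\le N-1$. Take $H=A_{P_N}$, the adjacency matrix of the path $1-2-\cdots-N$. Take $v=e_1$, the basis vector at an endpoint. Every word $w(H,K)$ containing $k$ annihilates the vector, so
\[
\M(H,K;v)=\spanop\{H^m e_1:m\ge0\}.
\]
Because $H^m e_1$ has nonzero $e_{m+1}$ coordinate and is supported on $\{1,\ldots,m+1\}$, the vectors $e_1,He_1,\ldots,H^{N-1}e_1$ are triangular with respect to the standard basis. Hence the cyclic module is all of $\R^N$.

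The key step is a support argument. For any noncommutative polynomial $p\in\R\langle h,k_0\rangle$ of formal degree at most $d$, the vector $p(H,K)e_1$ is a linear combination of words of length at most $d$ applied to $e_1$. Only the pure $h$-words survive, and $H^m e_1$ lies in $\spanop\{e_1,\ldots,e_{m+1}\}$ because $H$ moves support by at most one step along the path. So every response of a degree-$\le d$ polynomial lies in $\spanop\{e_1,\ldots,e_{d+1}\}$. If every member of a spanning family had degree at most $N-2$, the responses would lie in $\spanop\{e_1,\ldots,e_{N-1}\}$, which is a proper subspace of $\R^N=\M(H,K;v)$. That is a contradiction, so some member has degree at least $N-1$.

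For $N=1$ the claim is vacuous: degree $\ge0$ holds for any nonempty family, and the module $\R^1$ is nonzero, so any spanning family is nonempty. I should also check that the statement reads ``a member of degree at least $N-1$'' for the family evaluated at this particular pair and seed. The argument above covers any family, fixed or input-dependent, since it uses only the degree of the polynomials.

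I do not expect a real obstacle. The only care needed is to interpret the statement correctly: whether the family must work for all seeds or only for the exhibited one. The example handles the stronger, single-seed version, so both readings follow. Combined with Theorem~\ref{thm:probes}, this shows the degree bound $N-1$ is sharp.
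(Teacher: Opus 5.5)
Your proof is correct and is essentially the paper's argument: you set $K=0$ so that only powers of $H$ act on the seed, choose $H,v$ with an $N$-dimensional Krylov space, and note that responses of degree at most $d$ lie in a space of dimension at most $d+1$. The paper takes $H=\operatorname{diag}(\lambda_1,\ldots,\lambda_N)$ with distinct $\lambda_i$ and $v=\one$, using a Vandermonde argument, where you take the path adjacency matrix with an endpoint seed and a triangular-support argument; this is just a different instance of the same mechanism.
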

\begin{proof}
Take $K=0$, $H=\operatorname{diag}(\lambda_1,\ldots,\lambda_N)$ with distinct real $\lambda_i$, and $v=\one$. The Vandermonde matrix shows that $v,Hv,\ldots,H^{N-1}v$ are independent. Every polynomial of total degree at most $d$ evaluated at $(H,0)$ sends $v$ into $\spanop\{v,Hv,\ldots,H^dv\}$, which has dimension at most $d+1$. Therefore $d\ge N-1$ is necessary to span this $N$-dimensional module. This is a degree bound; it makes no optimality claim about the number of probes or gates.
\end{proof}

\section{Pointed reconstruction and a single trace}\label{sec:reconstruction}
We use the following graph-theoretic theorem, in its pointed formulation.

\begin{theorem}[Principal-moment reconstruction; {\cite[Theorem 5.4]{LuLi}}]\label{thm:input}
If two trees $T,T'$ satisfy
\[
\one^{\mathsf T}w(A_T,D_T)\one
=\one^{\mathsf T}w(A_{T'},D_{T'})\one
\quad\text{for every word }w,
\]
then $T\cong T'$.
\end{theorem}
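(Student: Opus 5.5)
The plan is to reduce Theorem~\ref{thm:input} to the classical fact that colour refinement (the one-dimensional Weisfeiler--Leman algorithm) identifies trees. To get there, I will show that the principal moments determine the stable colour-refinement data of $T$.

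\textbf{Step 1: labelled walk counts.} Let $n,n'\le N$ be the orders. Degrees lie in $\{1,\dots,N-1\}$, so Lagrange interpolation gives a polynomial $\pi_c$ with $\pi_c(D_T)$ equal to the diagonal projection onto the vertices of degree $c$, for every tree of order at most $N$. Linearity of the moments in words then shows that the hypothesis implies equality of
\[
\one^{\mathsf T}\pi_{c_0}(D)A\pi_{c_1}(D)A\cdots A\pi_{c_m}(D)\one .
\]
This number counts walks $u_0u_1\cdots u_m$ with $\deg u_i=c_i$. So the two trees have the same degree-labelled walk counts for every length and every label sequence. The case $m=0$ gives $n=n'$ and equal degree multisets.

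\textbf{Step 2: from walk counts to colour refinement.} This is the main obstacle. Labelled walk counts along paths are, in general, weaker than colour refinement, because colour refinement records branching trees of neighbourhoods rather than linear walks. My first attempt would run inside the pointed representation. The table of moments determines the Gram data of the cyclic module $\M(A,D;\one)$ together with the actions of $A$ and $D$ on it, up to a seed-preserving isometry; the frame of Theorem~\ref{thm:probes} is exactly the tool for this. It is then enough to show that $\M(A,D;\one)$ contains the indicator vector of every stable colour class of refinement started from the degree partition, and that these indicators can be produced by a uniform recipe.

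The recipe I have in mind is induction on refinement rounds. Suppose the indicators $\chi_P$ of the current cells lie in $\M$. Then $A\chi_P$ counts, at each vertex, its neighbours in $P$. The problem is that splitting a cell $Q$ according to the values of $A\chi_P$ needs entrywise (Hadamard) multiplication, which the module does not provide. For trees I would try to get around this using the degree matrix. Because the graph is a tree, one can peel leaves: the cell of leaves is obtained from $\pi_1(D)\one$, then the cell of their neighbours, and so on. This suggests that the centre-rooted refinement can be written with $D$-polynomials and $A$ alone. If this is not possible in general, I would fall back on the weaker target of matching the refinement colour multisets, which suffices for the conclusion.

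\textbf{Step 3: conclusion.} If the stable colour multisets agree, then $T\cong T'$ by the Immerman--Lander theorem that colour refinement identifies trees. Equivalently, the rooted unfoldings at the centres coincide, and these unfoldings are the trees themselves.

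For the hard step, the backup route is McKay's theorem. That theorem recovers a tree from the full collection of traces $\tr w(A,D)$. The task would then be to derive all such traces from the principal moments, presumably via distance polynomials, using the tree metric to express $\tr$ through $\one$-pointed quantities.
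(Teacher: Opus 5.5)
\paragraph{Verdict: genuine gap.} Step~2 is where all the content of the theorem lies, and the proposal leaves it open. Step~1 is correct, and your overall reduction would be sound if Step~2 were done.

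\paragraph{Step~2 has no bridging argument.} Step~1 gives degree-labelled walk counts. Equivalently, these are homomorphism counts from caterpillars, since a factor $D^{k}$ at a walk vertex attaches $k$ pendant edges. Colour-refinement equivalence, by contrast, is characterized by homomorphism counts from all trees, and nothing in the proposal bridges the two.

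Your reduction needs fixed polynomials $\Pi_P(a,d)$, the same for $T$ and $T'$, that evaluate to the diagonal projections onto the colour classes. With these, the moments $\one^{\mathsf T}\Pi_P A\Pi_Q\one$ would give cell sizes and quotient matrices, and Immerman--Lander would finish.

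Producing them means turning a count vector $A\chi_P$ into a diagonal matrix, which is a Hadamard operation. The first leaf-peeling round does work on trees. There $A\pi_1(D)A$ is already diagonal, because a leaf is never a common neighbour of two distinct vertices, so Lagrange polynomials in it split the degree classes by number of leaf neighbours. As soon as a class $P$ contains a vertex of degree at least two, however, $A\Pi_PA$ has off-diagonal entries at every distance-two pair whose midpoint lies in $P$. You give no way to isolate its diagonal.

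Without such a uniform recipe, the moments determine the pointed representation on $\M(A,D;\one)$ only up to an abstract seed-preserving isometry, as in Lemma~\ref{lem:gram}. Nothing then tells you which vectors are class indicators. This is exactly the pitfall the paper flags: an isometry of pointed representations must not be assumed to be a permutation. Your ``weaker target'' of matching colour multisets is the same problem.

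\paragraph{The McKay backup is circular as stated.} The distance polynomials give $J_N(A_T,D_T)=\one\one^{\mathsf T}$, hence $\one^{\mathsf T}w\one=\tr(wJ_N)$. This writes principal moments as full traces, which is the wrong direction. A full trace $\tr w(A,D)$ also records the action of $w$ on $\M(A,D;\one)^\perp$, which pointed data never see.

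This complement is nonzero whenever $T$ has a nontrivial automorphism. Its permutation matrix commutes with $A$ and $D$ and fixes $\one$, so it fixes $\M$ pointwise, and $\M$ is therefore proper. Recovering McKay's trace data from principal moments is thus essentially equivalent to the theorem itself.

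For comparison, the paper does not prove Theorem~\ref{thm:input}. It imports it from \cite[Theorem 5.4]{LuLi}, whose proof constructs a graph isomorphism directly. The paper's own McKay fallback in Appendix~\ref{sec:fulltrace} does not derive traces from principal moments. Instead it replaces the invariant by Frobenius Gram tables with seed $I$, which bypasses this statement rather than proving it.
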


This input is stronger than merely saying that orbit indicators belong to the principal module. The identification of abstract pointed representations must not be replaced by an unjustified assertion that an orthogonal map is a permutation. Theorem~\ref{thm:input} supplies exactly that graph-reconstruction step. McKay's constructive theorem \cite{McKay} concerns the full adjacency--degree representation; the principal-module statement and its proof are explicitly given in \cite{LuLi}. For completeness, Section~\ref{sec:fulltrace} gives a polynomial-resource fallback using only the full-representation version of McKay.

\begin{lemma}[An overcomplete Gram table]\label{lem:gram}
Suppose $z_i=U_i(H,K)v$ span $\M(H,K;v)$ and $U_0=1$. The three tables
\[
G_{ij}=z_i^{\mathsf T}z_j,\qquad
G^H_{ij}=z_i^{\mathsf T}Hz_j,\qquad
G^K_{ij}=z_i^{\mathsf T}Kz_j
\]
determine the pointed simultaneous representation on $\M$ up to an isometry fixing its seed.
\end{lemma}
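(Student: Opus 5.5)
The plan is to show that the three tables determine every word moment $v^{\mathsf T}w(H,K)v$ and every $H$- and $K$-matrix entry on $\M$ in a seed-compatible frame. A GNS-type argument then identifies the representation up to an isometry fixing $v$.

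First, I would fix notation. Let $z_0,\dots,z_m$ be the given spanning vectors, with $z_0=v$ because $U_0=1$. Let $\Phi:\R^{m+1}\to\M$ be the synthesis map $c\mapsto\sum_i c_iz_i$. It is surjective, and $G=\Phi^{\mathsf T}\Phi$ is its Gram matrix. The kernel of $\Phi$ equals $\ker G$, since $Gc=0$ implies $\|\Phi c\|^2=c^{\mathsf T}Gc=0$. So $\M\cong\R^{m+1}/\ker G$, with inner product given by $G$, and this identification is determined by $G$ alone. Since $\M$ is invariant under $H$ and $K$, for each $j$ we can write $Hz_j=\sum_i C^H_{ij}z_i$ for some matrix $C^H$. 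Then $G^H=GC^H$. We can solve this for $C^H$ modulo $\ker G$ by taking $C^H=G^+G^H$ with the pseudoinverse. Any other solution differs by columns in $\ker G$, which $\Phi$ kills, so the operator induced on the quotient is well defined and depends only on $(G,G^H)$. The same holds for $K$.

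Second, I would conclude the uniqueness statement. Suppose two pointed representations $(H,K,v)$ and $(H',K',v')$, produced by the same polynomials $U_i$, have equal tables. Define $\Psi:\M\to\M'$ by $\Psi(\Phi c)=\Phi'c$. This is well defined and isometric because $G=G'$: the kernels agree and inner products are preserved. It is onto because $\Phi'$ is onto. It maps $v=z_0$ to $v'=z'_0$. For intertwining, $\Psi Hz_j=\Psi\Phi C^H e_j=\Phi' C^H e_j$, and $H'z'_j=\Phi'C'^{H}e_j$. Here $C^H$ and $C'^H$ both solve the same equation $G X=G^H$, so they agree modulo $\ker G=\ker G'$. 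Hence $\Psi H=H'\Psi$ on $\M$, and likewise $\Psi K=K'\Psi$. Consequently every word moment agrees as well, since $v^{\mathsf T}w(H,K)v=\langle\Psi v,\Psi w(H,K)v\rangle=v'^{\mathsf T}w(H',K')v'$. I would state this moment corollary explicitly, because it is the form needed for Theorem~\ref{thm:input}.

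The main subtlety is the overcompleteness: the $z_i$ are dependent, so the structure matrices are not unique, and we must check that every ambiguity lies in $\ker G$. That is handled by the identity $\ker\Phi=\ker G$ above. Symmetry of $H$ is not needed for well-definedness, but it makes $G^H$ symmetric, which is consistent with the construction. I expect no further obstacle. The only care needed is to note that the representation being identified is the restriction of $(H,K)$ to the invariant subspace $\M$, with seed $v$.
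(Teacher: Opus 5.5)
Your proof is correct and follows the paper's argument. The map $\sum_i c_iz_i\mapsto\sum_i c_iz_i'$ is well defined and isometric because the $G$-tables agree, onto by spanning, seed-preserving since $z_0=v$, and intertwining by the $H$- and $K$-tables. The only cosmetic difference is in the last step: the paper checks intertwining by pairing $\Psi Hz_j$ and $H'z_j'$ against the spanning vectors $z_i'$, which is equivalent to your observation that $C^H-C'^H$ has columns in $\ker G$.
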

\begin{proof}
If two tables agree, send $\sum_i c_i z_i$ to $\sum_i c_i z_i'$. Equality of $G$ makes this map well defined and isometric, and the spanning hypothesis makes it onto. The other two tables imply that it intertwines $H$ and $K$: pair each desired equality with the spanning vectors $z_i'$. Since $z_0=v$, the seed is preserved.
\end{proof}

\begin{corollary}[Finite invariants for pointed representations]\label{cor:pointed}
For real symmetric pairs of dimension at most $N$ satisfying the spectral-diameter hypothesis, the three tables formed from the fixed probes in Theorem~\ref{thm:probes} are complete invariants of the pointed cyclic representation. Their entries are polynomial functions of $(H,K,v)$, of degree at most $2N-1$ in the matrix entries and degree two in the seed. Equality is equivalent to a seed-preserving isometry intertwining both generators on the cyclic modules.
\end{corollary}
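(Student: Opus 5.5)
The plan is to combine Theorem~\ref{thm:probes} with Lemma~\ref{lem:gram}, and then settle the degree count and the converse direction separately.

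\emph{Completeness.} Fix $N$ and a pair $(H,K,v)$ of dimension at most $N$ satisfying the spectral-diameter hypothesis. Theorem~\ref{thm:probes} gives
\[
\spanop\{z_{k,j}\}=\M(H,K;v),\qquad z_{k,j}=U_{k,j}(H,K)v,\quad 0\le j\le k<N.
\]
This holds with a family that depends only on $N$. In particular, two inputs $(H,K,v)$ and $(H',K',v')$ of possibly different dimensions are indexed by the same set $\{(k,j)\}$, so their three tables are directly comparable. Since $U_{0,0}=1$, the seed itself is $z_{0,0}$. Lemma~\ref{lem:gram} then applies verbatim. If the tables agree, the map
\[
\sum c_{kj}z_{kj}\longmapsto\sum c_{kj}z'_{kj}
\]
is well defined, isometric, onto $\M'$, intertwines $H,K$ with $H',K'$, and fixes the seed.

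The one point I will check explicitly is well-definedness in the direction $\M'\to\M$, that is, injectivity. It holds because a vanishing combination has zero norm, and the norm is computed from $G$. So $\M$ and $\M'$ are isometric even when the ambient dimensions differ.

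\emph{Converse.} Suppose $\phi\colon\M\to\M'$ is an isometry with $\phi(v)=v'$ and $\phi H=H'\phi$, $\phi K=K'\phi$ on the cyclic modules. Here $\M$ is invariant under $H$ and $K$, so these relations make sense. Every $U_{k,j}$ is a noncommutative polynomial, so induction on the recurrence \eqref{eq:triangle} gives $\phi(z_{kj})=z'_{kj}$. Inner products are preserved, so
\[
G=G',\qquad G^H_{ij}=\langle z_i,Hz_j\rangle=\langle\phi z_i,H'\phi z_j\rangle=G'^H_{ij},
\]
and likewise $G^K=G'^K$. This uses $Hz_j\in\M$.

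\emph{Degree count.} Each $U_{k,j}$ has formal degree at most $N-1$, with coefficients in $\Z$. Hence $z_{kj}$ is polynomial of degree at most $N-1$ in the entries of $H,K$, and linear in $v$. An entry $z_i^{\mathsf T}z_j$ therefore has matrix-entry degree at most $2N-2$. Inserting $H$ or $K$ adds one, so the table entries have degree at most $2N-1$ in the matrix entries and degree exactly two (bilinear) in the seed.

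The main obstacle, mild as it is, is being precise that "complete invariant" compares pointed representations of possibly different ambient dimensions, and that the uniform index set supplied by Theorem~\ref{thm:probes} is exactly what makes this comparison meaningful. Everything else follows from Lemma~\ref{lem:gram}.
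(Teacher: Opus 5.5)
Your proposal is correct and follows the paper's proof. Completeness comes from Theorem~\ref{thm:probes} together with Lemma~\ref{lem:gram}. The converse holds because an intertwining, seed-preserving isometry sends each $U_i(H,K)v$ to $U_i(H',K')v'$ and so preserves all three tables, and the degree bound follows from $\deg U_i\le N-1$. Your explicit checks simply spell out what the paper's four-line proof leaves implicit: injectivity via $G$, and the common index set when the ambient dimensions differ.
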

\begin{proof}
The forward implication is Lemma~\ref{lem:gram}. An intertwining isometry preserves every probe vector and hence all three tables. The degree bound follows from $\deg U_i\le N-1$. Integer radix packing below uses the additional integrality and height bounds of the tree inputs.
\end{proof}

For trees, define the standard distance polynomials
\begin{equation}\label{eq:J}
\begin{aligned}
P_0&=1,\qquad P_1=a,\qquad P_2=a^2-d,\\
P_k&=aP_{k-1}-(d-1)P_{k-2}\quad(k\ge3),\qquad
J_N=\sum_{k=0}^{N-1}P_k.
\end{aligned}
\end{equation}
On a tree $T$, let $F_k(u,v)$ be the indicator of $\operatorname{dist}(u,v)=k$. Then $F_0=I$ and $F_1=A_T$. For $k=2$, the diagonal of $A_T^2$ is $D_T$, and two distinct vertices have one common neighbour exactly when their distance is two, so $F_2=A_T^2-D_T$. For $k\ge3$, count neighbours $w$ of $u$ with $\operatorname{dist}(w,v)=k-1$. The unique path from $u$ to $v$ gives one such neighbour when $\operatorname{dist}(u,v)=k$, and $\deg(u)-1$ when $\operatorname{dist}(u,v)=k-2$; otherwise the count is zero. Thus $A_TF_{k-1}=F_k+(D_T-I)F_{k-2}$. Induction proves $P_k(A_T,D_T)=F_k$. Every distance is at most $n-1\le N-1$, hence
\begin{equation}\label{eq:Jval}
J_N(A_T,D_T)=\one\one^{\mathsf T}.
\end{equation}
These are classical identities from McKay's argument \cite[Lemma 5.1 and Corollary 5.2]{McKay}. The expression $J_N$ has degree at most $N-1$. The construction retains $J_N^*$ throughout and needs no additional self-adjointness premise for this expression.

\subsection{All coefficients and all digits}
Order the $M=N(N+1)/2$ polynomials $U_i$ row by row, with $U_0=1$. List
\[
X_\nu=U_i^*SU_j,
\qquad S\in\{1,a,d\},\quad 0\le i,j<M,
\]
first by $S$ in the displayed order, then by $i,j$: the exact index is $\nu=sM^2+iM+j$, with $s=0,1,2$ for $S=1,a,d$. There are $Q=3M^2$ entries. Put
\begin{equation}\label{eq:radix}
L=3N^3,\qquad R=L^{N-1},\qquad H_N=N^2R^2,
\qquad b_N=2H_N+1.
\end{equation}
For $N=1$ take $p_1=1$ and return the identity register directly. There is only one nonempty tree of order at most one. For $N\ge2$ our final polynomial is
\begin{equation}\label{eq:final}
p_N=\sum_{\nu=0}^{Q-1}b_N^\nu
           \bigl(J_NX_\nu+X_\nu^*J_N^*\bigr).
\end{equation}
We compile this sum by factoring its radix weights. Put
\[
\begin{aligned}
Y_N&=\sum_{j=0}^{M-1}b_N^jU_j,&
Z_N&=\sum_{i=0}^{M-1}b_N^{iM}U_i,\\
W_N&=1+b_N^{M^2}a+b_N^{2M^2}d.
\end{aligned}
\]
Expanding in the order $S,i,j$ gives the exact polynomial identity
\begin{equation}\label{eq:factored}
p_N=J_NZ_N^*W_NY_N+Y_N^*W_NZ_NJ_N^*.
\end{equation}
Here $W_N^*=W_N$, since the three slots are $1,a,d$. Thus \eqref{eq:factored} is precisely \eqref{eq:final}, with the same digits and their same order.

Its only large scalar input is $b_N$. Starting from $B=b_NI$, $M$ ordinary multiplications construct $B^M=b_N^MI$, and another $M$ construct $(B^M)^M=b_N^{M^2}I$. Squaring the latter gives $b_N^{2M^2}I$. Horner's rule with bases $B$ and $B^M$ constructs $Y_N$ and $Z_N$ in $O(M)$ further gates. The scalar powers are computed registers, rather than additional prescribed constants. Unused registers may be discarded without increasing the size; in particular no $P_2$ register is needed when $N=1$.

\begin{proof}[Proof of Theorem~\ref{thm:tree}]
The case $N=1$ was settled by the identity branch. Assume $N\ge2$. The matrices $A_T,D_T$ are symmetric, with $\Spec(D_T)\subseteq[0,N-1]$. Thus Theorem~\ref{thm:probes} applies. Lemma~\ref{lem:gram} and Theorem~\ref{thm:input} show that the three tables encoded by the $X_\nu$ form a complete tree invariant.

Every entry of these tables is an integer. The induced row-sum norms of $A_T,D_T$ are at most $N-1$. Applied to vectors with maximum norm, \eqref{eq:triangle} gives
\[
\|U_{k,j}(A_T,D_T)\one\|_\infty\le L^k.
\]
The same bound holds for the induced matrix norms of $U_{k,j}$. Consequently, for $S=I,A_T,D_T$,
\[
|\one^{\mathsf T}U_i^*SU_j\one|\le N^2 R^2=H_N.
\]
By \eqref{eq:Jval},
\[
\tr p_N(A_T,D_T)
=2\sum_{\nu=0}^{Q-1}b_N^\nu h_\nu(T),
\qquad h_\nu(T)=\one^{\mathsf T}X_\nu(A_T,D_T)\one.
\]
Balanced radix is injective for digits in $[-H_N,H_N]$: at the least index where two digit strings differ, their nonzero difference has absolute value at most $2H_N<b_N$ and cannot be divisible by $b_N$. Hence equality of final traces is equivalent to equality of all table entries. In particular $h_0=n$ records the order.

The shared probe circuit has $O(N^2)$ gates, and \eqref{eq:J} constructs $J_N$ in $O(N)$ gates. The factored compilation above uses $O(M)=O(N^2)$ more gates. All operations are fixed before seeing $T$. Formal adjoints are implemented by reversing the multiplication gates. Scalar-power registers have formal degree zero, so $Y_N,Z_N$ have degree at most $N-1$ and $W_N$ has degree at most one. Equivalently, since
\[
\deg X_\nu\le2(N-1)+1=2N-1,
\]
\eqref{eq:final} has degree at most $3N-2$ and is formally self-adjoint. The coefficients in \eqref{eq:triangle}, \eqref{eq:J} and the Horner circuit are integers.

Finally $\log_2 b_N=O(N\log(N+1))$. The same row-norm bound $R$ holds for $U_i^*$ by the reversed-product recurrence. Every scalar-power register has exponent at most $2M^2$, and the partial Horner sums for $Y_N,Z_N$ have exponents at most $M-1$ and $M(M-1)$, respectively. On trees, $\|J_N\|_\infty\le N$; the intermediate products in \eqref{eq:J} also have polynomially bounded entries. Every new register is therefore bounded by polynomial factors in $N$ times $R^2b_N^{3M^2}$. This gives the stated $O(N^5\log(N+1))$ entry and trace bound. The same estimates show polynomial time for constructing and evaluating the circuit, including bit operations.
\end{proof}

\section{A fixed cyclic representative, without an adaptive schedule}\label{sec:cyclic}
The single-trace construction already proves Theorem~\ref{thm:tree}. A second application of the fixed frame gives an integer sum-of-squares AD expression whose image is the principal module and on which $\one$ is cyclic.

\subsection{One fixed rank condensation}
Assume $N\ge2$. Let $U_0=1,U_1,\ldots,U_{M-1}$ be the probes. With $L,R$ as in \eqref{eq:radix}, set
\[
H_0=(M-1)R\,2^{(N-2)(M-2)},\qquad
H_1=N!H_0^{N-1},\qquad \tau=H_1+2.
\]
Define $Z_0=1$ and
\begin{equation}\label{eq:folded}
Z_{j+1}=\sum_{i=1}^{M-1}(2^j\tau)^{i-1}U_i,
\qquad 0\le j\le N-2.
\end{equation}
These sums again use Horner's rule. Their degree is at most $N-1$, their shared size is polynomial, and their coefficient and evaluation lengths are polynomial. More explicitly, $\log\tau=O(N^4)$ and the evaluated vector entries have $O(N^6)$ bits.

\begin{lemma}[A basis in the first positions]\label{lem:prefix}
On every tree of order at most $N$, if $r=\dim\M_T$, then
\[
z_0=\one,\ z_1=Z_1\one,\ldots,z_{r-1}=Z_{r-1}\one
\]
are independent. All $N$ vectors $z_i$ belong to $\M_T$.
\end{lemma}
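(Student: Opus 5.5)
The membership claim is immediate. Each $U_i(A_T,D_T)\one$ lies in $\M_T=\M(A_T,D_T;\one)$, and each $Z_{j+1}$ is an integer combination of the $U_i$, so every $z_j\in\M_T$. For independence, the plan is a polynomial-identity-plus-root-bound argument in the single variable $s=\tau$.

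\textbf{Setup.} Put $u_i=U_i(A_T,D_T)\one$ for $1\le i\le M-1$. By the row-norm estimate in the proof of Theorem~\ref{thm:tree}, these are integer vectors with $\|u_i\|_\infty\le R$. By Theorem~\ref{thm:probes}, $\one,u_1,\ldots,u_{M-1}$ span $\M_T$, which has dimension $r$. Replace $\tau$ by an indeterminate $s$ and write
\[
z_j(s)=\sum_{i=1}^{M-1}(2^{j-1}s)^{i-1}u_i\in\Z[s]^n,\qquad 1\le j\le N-1 .
\]
The coefficient of $s^{i-1}$ in $z_j(s)$ is $2^{(j-1)(i-1)}u_i$. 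Summed over $i$, these coefficients have absolute values at most $(M-1)R\,2^{(N-2)(M-2)}=H_0$; this is where the constant $H_0$ comes from.

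\textbf{Reduction to a nonzero polynomial.} Choose $r$ coordinates on which $\M_T$ projects isomorphically, and let $\Delta(s)$ be the $r\times r$ minor of $[\one,z_1(s),\ldots,z_{r-1}(s)]$ on those rows. Expanding $\Delta$ by the Leibniz formula gives at most $r!\le N!$ products. Each product takes one entry from the column $\one$ (absolute value $1$) and $r-1\le N-1$ polynomial entries whose coefficient sums are at most $H_0$. Hence every coefficient of $\Delta$ is an integer of absolute value at most $N!H_0^{N-1}=H_1$. By Cauchy's root bound, a nonzero integer polynomial with coefficients bounded by $H_1$ has no root of absolute value at least $H_1+1$. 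Since $\tau=H_1+2$, it suffices to prove that $\Delta\not\equiv0$ in $\Z[s]$.

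\textbf{Main obstacle: $\Delta\not\equiv0$.} In independent variables, $\Delta(s)=F(s,2s,\ldots,2^{r-2}s)$, where $F(t_1,\ldots,t_{r-1})$ is the corresponding minor of $[\one,z(t_1),\ldots,z(t_{r-1})]$ with $z(t)=\sum_i t^{i-1}u_i$.
\begin{itemize}
\item The multivariate $F$ is nonzero. The coefficients of the curve $z(t)$ span $\M_T$ modulo $\one$, so generic points of the curve give a basis.
\item The difficulty is that the specialization $t_j=2^{j-1}s$ could a priori cancel the top-degree part.
\end{itemize}
To handle this, I would choose greedy pivots. Let $i_1<\cdots<i_{r-1}$ be the first indices at which $u_{i_k}$ is independent of $\one,u_{i_1},\ldots,u_{i_{k-1}}$ (and of all earlier $u_i$). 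Expanding $F$ multilinearly, $F=\sum \det[\one,u_{a_1},\ldots,u_{a_{r-1}}]\,\prod_j t_j^{a_j-1}$.

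Now order monomials by maximal total degree, breaking ties by the $2$-adic weight $\sum_j(j-1)(a_j-1)$. The extremal surviving term should come from the pivot set with the largest pivot assigned to the largest multiplier $2^{r-2}$. I would argue this term is isolated: any other assignment either lowers the total degree or strictly lowers the weight. Lowering the weight is the rearrangement inequality, and it is strict because the $a_j$ are distinct whenever the minor is nonzero. If this isolation works, the leading coefficient of $\Delta$ is a nonzero integer multiple of the pivot minor, which proves Lemma~\ref{lem:prefix}.

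\textbf{Fallback.} If isolating the top term proves delicate, I would instead run the argument inductively in $j$. Assume $\one,z_1,\ldots,z_{j-1}$ are already fixed integer vectors. Then treat only $t_j=2^{j-1}\tau$ as the variable in the exterior product $\one\wedge z_1\wedge\cdots\wedge z_{j-1}\wedge z(t)$, which is nonzero as a polynomial whenever $j<r$. This route needs coefficient bounds that track the earlier $\tau$-powers, so it would require a more careful normalization of $H_0$.
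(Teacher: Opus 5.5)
\textbf{Gap in the proof that $\Delta\not\equiv0$.} Everything around that step matches the paper: replacing $\tau$ by a variable, fixing $r$ rows on which $\M_T$ projects isomorphically, the per-entry $\ell_1$ bound $H_0$, the Leibniz bound $H_1$, the root bound at $\tau=H_1+2$, and the membership claim. The isolation argument does not work.

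Under $t_j=2^{j-1}s$, all monomials of a given total degree collapse onto the same power of $s$. Your tie-breaking weight survives only as a numerical factor $2^{w}$, and a unique monomial of maximal weight does not stop the integer coefficients from cancelling. Take a single index set with exponents $\{0,1,2\}$. The signed sum over its six assignments is $2^5-2^4-2^4+2^2+2^2-2^1=6$. The top term is outweighed by the others ($32<42$), so uniqueness of the top weight, which is all the rearrangement inequality gives, cannot show the sum is nonzero. It is nonzero only because it equals the Vandermonde determinant in the nodes $1,2,4$.

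The degree direction is also wrong. Pivots taken as the first independent indices minimize total degree. At maximal total degree other index sets generally contribute, so ``any other assignment lowers the total degree'' is false. At either extreme you also have to exclude other index sets of the same total degree, which you do not address.

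The fallback does not close the gap with the paper's constants. Through $z_1,\ldots,z_{j-1}$, the coefficients of that one-variable exterior polynomial grow like powers of $\tau$, so a Cauchy bound cannot certify the point $2^{j-1}\tau$.

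\textbf{The fix.} Group your multilinear expansion by index sets rather than isolating single assignments. For $S=\{i_1<\cdots<i_{r-1}\}$, the permutations of $S$ contribute
\[
\det[\one,u_{i_1},\ldots,u_{i_{r-1}}]\,s^{\sum_k(i_k-1)}\det\bigl[(2^{i_k-1})^{j-1}\bigr]_{j,k}.
\]
The last factor is a Vandermonde determinant in distinct powers of $2$, so this is a nonzero multiple of the minor. Your greedy pivots satisfy $a_k\ge i_k$ for every independent choice $a_1<\cdots<a_{r-1}$. Indeed, at most $k-1$ of the $a$'s can be smaller than $i_k$, because $u_1,\ldots,u_{i_k-1}$ span only a $(k-1)$-dimensional space modulo $\R\one$. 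Hence the lowest power of $s$ in $\Delta$ comes from the pivot set alone and has nonzero coefficient.

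This is the column-side dual of the paper's argument. The paper works on $\M_T/\R\one$, row-reduces the coordinate polynomials of $F(x)$ to distinct degrees $d_i$, and reads off the top coefficient $\prod_i c_i\cdot\det[(2^{d_i})^j]\neq0$. In both versions, what rules out cancellation is the Vandermonde in the folded nodes, not a weight ordering.
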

\begin{proof}
Work first with a variable $x$ in place of $\tau$, and pass to the quotient of $\M_T$ by $\R\one$. The coefficient vectors of
\[
F(x)=\sum_{i=1}^{M-1}U_i\one\,x^{i-1}
\]
span this quotient. Choose linear coordinates on the quotient, and by constant row operations put the resulting $r-1$ independent coordinate polynomials in a basis of distinct degrees $d_1,\ldots,d_{r-1}$. The leading coefficient of the determinant of their evaluations at
\[
x,2x,\ldots,2^{r-2}x
\]
is the product of their nonzero leading coefficients times
\[
\det\bigl[(2^{d_i})^j\bigr]_{1\le i\le r-1,\,0\le j\le r-2}\ne0.
\]
Independence in the quotient gives full column rank over $\R(x)$ in the original vertex coordinates. Thus some $r\times r$ coordinate minor of
$[\one,F(x),F(2x),\ldots,F(2^{r-2}x)]$ is a nonzero integer polynomial.

Each polynomial entry outside the first column has coefficient $\ell_1$ norm at most $H_0$. Every such minor has coefficient $\ell_1$ norm at most $N!H_0^{N-1}=H_1$. A nonzero integer polynomial with this bound has no real root greater than $H_1+1$: its leading coefficient has absolute value at least one and dominates the lower coefficients there. Therefore the same minor is nonzero at $x=\tau$.
\end{proof}

The map $f(x)\mapsto(f(x),f(2x),\ldots)$ is a folded-Wronskian rank condenser \cite{FS12,GK,FG}. We use this established rank-preservation mechanism after constructing the uniform polynomial frame. The integer height bound then supplies one prescribed specialization that works for every permitted input.

\subsection{Fraction-free orthogonal residuals}
For $0\le k<N$, let $K_k=[z_0\ \cdots\ z_{k-1}]$ and $G_k=K_k^{\mathsf T}K_k$. Define
\begin{equation}\label{eq:residual}
u_0=\one,\qquad
u_k=(\det G_k)z_k-K_k\adj(G_k)K_k^{\mathsf T}z_k\quad(k\ge1).
\end{equation}
For $k<r$, $u_k$ is a nonzero multiple of the orthogonal residual of $z_k$ against its predecessors. Hence $u_0,\ldots,u_{r-1}$ are mutually orthogonal and nonzero. For $k=r<N$, the matrix $G_r$ is invertible and $z_r\in\im K_r$, so $u_r=0$. For $k>r$ they also vanish: then $\det G_k=0$ and
\[
K_k\adj(G_k)=0.
\]
Indeed $G_k\adj(G_k)=0$, and $\ker(K_k^{\mathsf T}K_k)=\ker K_k$.

Set
\begin{equation}\label{eq:jacobi}
C^{\mathrm{aux}}=u_0u_0^{\mathsf T}
       +\sum_{k=0}^{N-2}(u_k+u_{k+1})(u_k+u_{k+1})^{\mathsf T}.
\end{equation}
In the orthonormal basis $u_i/\|u_i\|$, its restriction to $\M_T$ is tridiagonal with strictly positive off-diagonal entries. Powers of this matrix applied to the first basis vector successively have nonzero coordinates in positions $1,2,\ldots,r$. Thus it is cyclic from $u_0=\one$. Moreover, $u_0$ together with the adjacent sums $u_k+u_{k+1}$ spans $\M_T$. The kernel of their positive-semidefinite outer-product sum is exactly $\M_T^\perp$, so its image is $\M_T$.

\begin{theorem}[Uniform cyclic principal matrix]\label{thm:uniformcyclic}
There is a deterministic polynomial-time construction, from $N$ alone, of an integer AD circuit $C_N$, formally a sum of Hermitian squares, with degree at most $2(4N-3)(N-1)$ for $N\ge2$ and polynomial size and bit complexity such that, on every tree of order at most $N$,
\[
C_N\succeq0,\qquad \im C_N=\M_T,
\qquad \spanop\{\one,C_N\one,\ldots,C_N^{N-1}\one\}=\M_T.
\]
All its gates act on the original matrix space. Formula \eqref{eq:residual} requires no determinant, inverse, rank, or trace oracle.
\end{theorem}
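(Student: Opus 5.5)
The plan is to show that the auxiliary matrix $C^{\mathrm{aux}}$ of \eqref{eq:jacobi} is, up to a positive scalar, the evaluation of a fixed integer sum of Hermitian squares in $\Z\langle a,d\rangle$. Once that is established, the three spectral claims follow from the discussion after \eqref{eq:jacobi}, because a positive multiple changes neither image nor cyclicity. Concretely, I will build polynomials $V_k\in\Z\langle a,d\rangle$, for $0\le k<N$, with
\[
V_k(A_T,D_T)\one=n^{e_k}u_k ,
\]
where the exponents $e_k$ depend only on $k$. Writing $J=J_N$ as in \eqref{eq:J}, \eqref{eq:Jval} gives $(V_kJ)(V_kJ)^*\mapsto n^{2e_k+1}u_ku_k^{\mathsf T}$. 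To make all weights equal, I first pad each $V_k$ by harmless factors $J/n$-style scalars realized as $JJ$ (which evaluates to $nJ$). Then I set
\[
C_N=(V_0J)(V_0J)^*+\sum_{k=0}^{N-2}\bigl((V_k+V_{k+1})J\bigr)\bigl((V_k+V_{k+1})J\bigr)^* ,
\]
which is formally a sum of Hermitian squares and evaluates to $n^{c}C^{\mathrm{aux}}$ with $c>0$.

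\textbf{Scalars as operators.} The key device is that for any polynomial $Q$, the identity $JQ\,\one=(\one^{\mathsf T}Q\one)\one$ holds on trees. Chaining these,
\[
JQ_1JQ_2\cdots JQ_m\one=\Bigl(\prod_t\one^{\mathsf T}Q_t\one\Bigr)\one .
\]
So any monomial in the Gram scalars $g_{ij}=z_i^{\mathsf T}z_j=\one^{\mathsf T}Z_i^*Z_j\one$, with $Z_i$ from \eqref{eq:folded} and $Z_0=1$, multiplied by a vector $z_l$, is realized by the word $Z_l\,JQ_1\cdots JQ_m$ applied to $\one$, where $Q_t=Z_i^*Z_j$.

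Both $\det G_k$ and the entries of $K_k\adj(G_k)K_k^{\mathsf T}z_k=\sum_{p,q}z_p\,\adj(G_k)_{pq}\,g_{qk}$ are integer polynomials in the $g_{ij}$, homogeneous of degree $k$ in the scalars. The second term has $k-1$ scalars from the adjugate and one from $g_{qk}$. Hence $u_k$ is a homogeneous degree-$k$ scalar combination of the $z_l$, and the realization above introduces the same number of $J$'s in every term. This is what keeps $e_k$ uniform and is why no positive-factor mismatch arises.

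\textbf{Avoiding exponential expansion.} To keep the size polynomial, I will not expand $\det$. Instead I use a division-free determinant and adjugate circuit, such as Berkowitz's algorithm or Samuelson--Berkowitz, of size polynomial in $k$, run over the commutative ring generated by the operators $E_{ij}=JZ_i^*Z_jJ$. These satisfy
\[
E_{ij}E_{i'j'}=n\,g_{ij}g_{i'j'}\,J ,
\]
so products of $m$ of them equal $n^{m-1}$ times the scalar product times $J$. Since Berkowitz's circuit is homogeneous of a fixed degree in the entries, this factor is again uniform. The additions of constants that Berkowitz uses must be homogenized, which I would handle by working with its homogeneous form, for instance by computing characteristic-polynomial coefficients and keeping only the top one.

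\textbf{Expected obstacle and remaining bounds.} The main obstacle is matching the stated degree bound $2(4N-3)(N-1)$ while retaining the sum-of-squares shape. I would aim for $\deg V_kJ\le(4N-3)(N-1)$ as follows. Each scalar factor $JZ_i^*Z_j$ costs about $(N-1)+2(N-1)$, there are at most $N-1$ such factors, and the final $Z_l$ and $J$ contribute the rest. Achieving this count requires multiplying scalar operators through a single $J$ per factor rather than the two in $E_{ij}$, and this bookkeeping may need the chained form $Z_lJQ_1J\cdots$ instead of the general $E_{ij}$ ring. If this degree count fails, I would first reorganize the determinant computation so that each scalar factor carries one $J$ only. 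Bit and size bounds follow from $\log\tau=O(N^4)$, the entry bounds stated after \eqref{eq:folded}, and Hadamard's inequality for the Gram determinants.
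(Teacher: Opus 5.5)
\textbf{Where the gap is.} Your strategy is the paper's: lift the Gram scalars through $J_N$, run a graded division-free determinant/Cramer circuit so that every term of $u_k$ picks up the same power of $n$, realize the fraction-free residual by operators $V_k$, and take the Hermitian-square Jacobi sum. Positivity, image, cyclicity, size and bit complexity all go through as you describe. The gap is the degree bound, which is part of the statement and which your main route does not meet.

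With the two-$J$ lifts $E_{ij}=JZ_i^*Z_jJ$ of degree at most $4(N-1)$, the residual operator $Z_k\widehat\Delta_k-\sum_{i<k}Z_i\widehat\Delta_{k,i}$ has degree $(4k+1)(N-1)$. It already \emph{equals} $n^{k-1}u_k\one^{\mathsf T}$, because it ends in $J$. Your extra trailing $J$ is therefore redundant, and it pushes the degree of $C_N$ to $2(4N-2)(N-1)$, above the claimed $2(4N-3)(N-1)$. The cross-$k$ padding is also unnecessary. Put $t_0=\one$ and $t_k=n^{k-1}u_k$. In the orthonormal basis $t_i/\|t_i\|$, the sum $t_0t_0^{\mathsf T}+\sum_k(t_k+t_{k+1})(t_k+t_{k+1})^{\mathsf T}$ is still tridiagonal, with off-diagonal entries $\|t_k\|\,\|t_{k+1}\|>0$ for $k+1<r$. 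Moreover, $t_0$ and the adjacent sums still span $\M_T$. So unequal positive weights cost nothing. Drop the trailing $J$, take $V_0=J$, and do not pad; this is exactly the paper's construction and gives $2(4N-3)(N-1)$.

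\textbf{Your one-$J$ idea.} Your fallback with one $J$ per scalar is sound and in fact better. It needs an observation you did not state, which turns your monomial-wise chaining into something compatible with a polynomial-size circuit. On trees, the matrices $X$ with $X\one\in\R\one$ form a unital subalgebra, and $X\mapsto\lambda_X$, where $X\one=\lambda_X\one$, is a ring homomorphism to $\R$. The operators $\widehat g_{ij}=JZ_i^*Z_j$ lie in this subalgebra with $\lambda=g_{ij}$. Hence any division-free circuit (Berkowitz or Bird) applied to them gives $\widehat f$ with $\widehat f\one=f(g)\one$. Constants can be realized as multiples of $I$, so neither homogenization nor powers of $n$ arise. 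Then $V_k=Z_k\widehat\Delta_k-\sum_{i<k}Z_i\widehat\Delta_{k,i}$ satisfies $V_k\one=u_k$ exactly, and your formula with $V_0=1$ evaluates to $nC^{\mathrm{aux}}$. Each factor $(V_k+V_{k+1})J$ has degree at most $(3N-1)(N-1)$, so $C_N$ has degree at most $2(3N-1)(N-1)\le2(4N-3)(N-1)$ for $N\ge2$.
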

\begin{proof}
For $N=1$ return $C_1=I$. Assume $N\ge2$, and abbreviate $J_N$ to $J$. Compute original-order matrix registers
\[
B_{ij}=JZ_i^*Z_jJ.
\]
On a tree of order $n$, these satisfy $B_{ij}=g_{ij}J$, where $g_{ij}=z_i^{\mathsf T}z_j$, and $J^2=nJ$. A homogeneous scalar polynomial $f$ of positive degree $q$ in the Gram entries can therefore be compiled so that
\begin{equation}\label{eq:homogeneous-lift}
\widehat f(B)=n^{q-1}f(g)J.
\end{equation}
Indeed, addition of equal degrees preserves this identity, while multiplication of positive degrees $p,q$ gives the factor $n^{p+q-1}$ by $J^2=nJ$. Degree-zero constants act by prescribed integer scaling; they are never represented by a putative multiplicative identity $J$.

Division-free determinant circuits are classical \cite{Soltys,Bird}. We use Bird's homogeneous recurrence: for a $k\times k$ scalar matrix $G$, put $F_1=G$ and $F_{q+1}=\mu(F_q)G$, where $\mu(F)$ keeps the strictly upper-triangular entries, has zero entries below the diagonal, and has diagonal entries $-\sum_{\ell>i}F_{\ell\ell}$. Then $(-1)^{k-1}(F_k)_{00}=\det G$. Each stage has homogeneous degree $q$; shared ordinary matrix multiplication implements the scalar recurrence with $O(k^4)$ arithmetic gates. Thus \eqref{eq:homogeneous-lift} applies directly, with all integer coefficients implemented as scaling gates. The auxiliary $k\times k$ arrays index original-order matrix registers. Let $b_k=(g_{ik})_{i<k}$ and let $G_k^{(i)}$ be $G_k$ with column $i$ replaced by $b_k$. The degree-$k$ determinant circuits give
\[
\widehat\Delta_k=n^{k-1}\det(G_k)J,\qquad
\widehat\Delta_{k,i}=n^{k-1}\det(G_k^{(i)})J.
\]
Define $V_0=J$ and, for $1\le k<N$,
\[
V_k=Z_k\widehat\Delta_k-\sum_{i<k}Z_i\widehat\Delta_{k,i}.
\]
The polynomial column-replacement identity
$\det(G_k^{(i)})=(\adj(G_k)b_k)_i$ holds also when $G_k$ is singular. Thus on trees $V_k=n^{k-1}u_k\one^{\mathsf T}$. With $t_0=\one$ and $t_k=n^{k-1}u_k$ for $k\ge1$, set
\begin{equation}\label{eq:integer-sos}
C_N=V_0V_0^*+\sum_{k=0}^{N-2}(V_k+V_{k+1})(V_k+V_{k+1})^*.
\end{equation}
Its value is $n[t_0t_0^{\mathsf T}+\sum_k(t_k+t_{k+1})(t_k+t_{k+1})^{\mathsf T}]$. The $t_i$ have the same orthogonal nonzero prefix and zero tail as the $u_i$. The Jacobi and image arguments following \eqref{eq:jacobi} apply with these positive rescalings, proving the three conclusions. Formula \eqref{eq:integer-sos} is formally self-adjoint and evaluates to a positive semidefinite matrix on every real symmetric pair; the image and cyclicity claims use the tree frame properties.

Since $\deg Z_i,\deg J\le N-1$, we have
\[
\deg B_{ij}\le4(N-1),\quad
\deg\widehat\Delta_k,\deg\widehat\Delta_{k,i}\le4k(N-1),
\]
and $\deg V_k\le(4k+1)(N-1)$. Equation~\eqref{eq:integer-sos} consequently has degree at most $2(4N-3)(N-1)$. There are polynomially many determinant circuits of order at most $N$. Their auxiliary arrays consist of original-order matrix registers; no determinant oracle or larger matrix input is used. The standard homogeneous determinant recurrences use only integer addition and multiplication and have polynomial intermediate coefficient height. Together with the $O(N^6)$-bit folded-frame entries and degrees at most $N$ in the Gram variables, this gives polynomial bit complexity for every register. The construction does not assert that the spectrum of $C_N$ alone separates trees.
\end{proof}

\section{A linear trace-degree lower bound}\label{sec:lower}
The bound $3N-2$ is not claimed to be optimal in its leading constant. Its order is optimal.

\begin{proposition}\label{prop:lower}
For $m\ge2$, let $T_m$ and $T'_m$ be the spiders with arm lengths
\[
(m,m,m+2)\quad\text{and}\quad(m,m+1,m+1),
\]
respectively. Both have $3m+3$ vertices and maximum degree three. Every AD word containing fewer than $2m$ occurrences of $A$ has the same trace on these two trees. Consequently, any single-trace polynomial separating all trees of order at most $N$ has degree at least $2\lfloor(N-3)/3\rfloor$ for $N\ge9$.
\end{proposition}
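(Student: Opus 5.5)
We prove Proposition~\ref{prop:lower} by expanding the trace of a word as a sum over closed walks, with degree factors, and comparing the two spiders through a local-isomorphism argument. Write a word $w$ in $A,D$ with $t<2m$ occurrences of $A$. Then $\tr w(A_T,D_T)$ is a sum over closed walks $x_0,x_1,\ldots,x_t=x_0$ of length $t$ in $T$. Each walk carries the weight $\prod\deg(x_{s})^{e_s}$, where $e_s$ counts the $D$'s sitting between the consecutive $A$'s at that position; stationary steps are forced, since $D$ is diagonal. A closed walk of length $t<2m$ stays within distance $<m$ of its start. It therefore never reaches both ends of an arm of length $\ge m$, and it sees at most one "special" feature.

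The plan is to define a bijection of vertex sets $\phi:V(T_m)\to V(T'_m)$ and to show it preserves, for every vertex $x$, the multiset of weighted closed walks of length $t$ based at $x$, up to rooted isomorphism of radius-$(m-1)$ balls. I would set $\phi$ up by grouping vertices by their rooted $(m-1)$-neighbourhood type. Both spiders have the same center type: a degree-3 vertex whose three arms have length $\ge m$, so within radius $m-1$ they look identical and no leaf is visible. A vertex at distance $i$ from a leaf, with $i\le m-1$, sees the leaf at distance $i$. Its other direction leads to the center at distance $\ell-i$ on an arm of length $\ell$, and the center is visible when $\ell-i\le m-1$.

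So I would count, for each pair $(i,j)$ with $i=$ distance to the leaf (capped at "$\ge m$") and $j=$ distance to the center (capped at "$\ge m$"), the number of arm vertices of that type. For arm lengths $(m,m,m+2)$ versus $(m,m+1,m+1)$, the sequences $\ell-i$ collect as follows:
\begin{itemize}
\item The multiset of arm lengths $\{m,m,m+2\}$ and $\{m,m+1,m+1\}$ have the same sum.
\item The visible types are exactly the pairs $(i,\ell-i)$ with both coordinates $<m$.
\end{itemize}
The rooted $(m-1)$-balls then depend only on these capped pairs. The main obstacle is checking that the two multisets of capped pairs agree. An arm of length $\ell$ contributes pairs $(i,\ell-i)$ for $i=1,\ldots,\ell$, and the counting must produce equal multisets after capping at $m$. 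If the pure capped count fails, I would instead use radius $\lfloor (t)/2\rfloor$ balls, since a closed walk of length $t$ only reaches distance $\le t/2<m$. The relevant pairs then have both coordinates $\le m-1$ simultaneously, which requires $\ell\le 2m-2$. With $\ell\in\{m,m+1,m+2\}$, such pairs have $i\in[\ell-m+1,m-1]$, of count $2m-1-\ell$. The totals are $2(m-1)+(m-3)$ versus $(m-1)+2(m-2)$, so both sides equal $3m-5$. I would check the individual pairs $(i,\ell-i)$ and expect a genuine mismatch to be handled by pairing walks at the center across arms.

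For the corollary, given $N\ge9$, take $m=\lfloor (N-3)/3\rfloor\ge2$, so that $3m+3\le N$. Any separating polynomial has $\tr p(T_m)\ne \tr p(T'_m)$. Expand $p$ into words: by linearity some word with at least $2m$ occurrences of $A$ appears, hence $\deg p\ge 2m$.
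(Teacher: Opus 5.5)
You have several parts right: reducing $\tr w(A,D)$ to degree-weighted closed walks, the remark that a walk with $t<2m$ adjacency steps can meet at most one special vertex, and the deduction of the degree bound. In the corollary you should also state $T_m\not\cong T'_m$, e.g.\ because the arm-length multisets differ.

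The gap is the central step: the base-point bijection $\phi$ you propose does not exist. Your own count already signals this, since only the total $3m-5$ of vertices seeing both the leaf and the centre agrees. Write $i$ and $j$ for the distances to the leaf and to the centre. An arm of length $\ell$ contributes the pairs with $i+j=\ell$. For example, $(i,j)=(m-1,1)$ occurs twice in $T_m$ and once in $T'_m$, and distinct pairs give non-isomorphic degree-labelled rooted $(m-1)$-balls.

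This is not an artefact of using balls. Take $m=2$ and the word $ADA$, whose diagonal entry at $x$ is $\sum_{y\sim x}\deg y$. The diagonal multisets are $\{2,2,2,3,4,4,4,5,6\}$ on $T_2$ and $\{2,2,2,3,3,4,5,5,6\}$ on $T'_2$, although both traces equal $32$. Hence no bijection of base points can match the per-vertex weighted walk counts, even for a single word. Equality of traces is a global cancellation, and your ``pairing walks at the centre across arms'' is exactly the part left unproved.

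The missing idea is to index walks by their image rather than by their base point. This is where your ``at most one special feature'' observation pays off, and it is how the paper argues. A closed walk with $t<2m$ adjacency steps traverses every edge of its image at least twice. So the image is a subtree with fewer than $m$ edges, containing at most one of the centre and the three leaves. The weighted number of walks realizing a given word with a given degree-labelled image, summed over all base points in that image, depends only on the labelled image. The placements then match in three cases:
\begin{itemize}
\item Images through the centre occur identically in both spiders, because every arm has length at least $m$.
\item Images through a leaf are paths from that leaf, the same for all three leaves.
\item A path image with $\ell$ edges avoiding the special vertices has $\sum_a(L_a-1-\ell)=3m+2-3(\ell+1)$ placements in either spider.
\end{itemize}

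Equivalently, at a noncentral vertex the local count is a constant, plus a function of the capped distance to the leaf, plus a function of the capped distance to the centre, because no single walk can reach both. The trace therefore depends only on the two marginal distributions. These agree, since each arm contributes every $i\in\{0,\ldots,m-1\}$ and every $j\in\{1,\ldots,m-1\}$ exactly once. Your $\phi$ would instead require the joint distribution, which differs.
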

\begin{proof}
A trace word with $s$ adjacency factors is a sum over closed walks of length $s$, with products of vertex degrees attached at prescribed positions. On a tree, such a walk traverses every edge of its image at least twice. Its image therefore has at most $s/2<m$ edges.

Call the center and the three leaves special. The image cannot contain two special vertices. If it contains the center, its degree-labelled embeddings are the same in both spiders, since every arm has length at least $m$. If it contains a leaf but not the center, its image is a short path and the three leaf neighbourhoods agree. If it contains no special vertex, it is a path of length $\ell<m$ inside the degree-two portion of an arm. An arm of length $L_a$ has $L_a-1-\ell$ such undirected placements, so their total is $\sum_a L_a-3(\ell+1)$, equal for the two spiders. Orientations, choices of starting position, and repeated-walk patterns contribute the same multiplicities to each placement. Hence every degree-labelled walk count, and therefore every trace word in question, agrees. The case $s=0$ follows directly from the common degree multiset.

A polynomial of total degree less than $2m$ is a linear combination of such words. The unique degree-three center has two leaves at distance $m$ in $T_m$ and one in $T'_m$, so the spiders are nonisomorphic. Taking $m=\lfloor(N-3)/3\rfloor$ proves the stated bound.
\end{proof}

\section{Relation to prior constructions}
The graph-theoretic inputs are McKay's full adjacency--degree reconstruction theorem \cite{McKay} and the pointed reconstruction theorem \cite[Theorem 5.4]{LuLi}. McKay's distance-polynomial ingredient is used explicitly in the construction of $J_N$. Tree canonization in logarithmic space goes back to Lindell \cite{Lindell}; Arvind, Datta, Faris, and Khan give a polynomial canon in logarithmic space \cite{ADFK}. The computational task here prescribes an original-dimension, low-degree matrix circuit and a single final scalar readout.

Trace coordinates themselves are classical. Procesi proves trace generation for simultaneous-conjugation invariants and treats orthogonal invariants using words in the matrices and their transposes \cite{Procesi}; Razmyslov studies trace identities of full matrix algebras in characteristic zero \cite{Razmyslov}. Bounds on the length of generated matrix algebras control the span of all short words, from the work of Paz and Pappacena to Shitov's $O(n\log n)$ bound for $n\times n$ matrices \cite{Paz,Pappacena,Shitov}. Sprung gives a further log-log saving in this general bound \cite{Sprung}. The full algebra can have dimension $n^2$, whereas a cyclic module from one seed has dimension at most $n$. These word-length bounds do not specify the fixed quadratic-size family of mixed polynomials required here.

Fixed short word families are also known to span the full matrix algebra on a Zariski-open set of tuples: Klep and \v{S}penko construct sweeping words, and Jia and Capel develop the related generic quantum Wielandt bound \cite{KlepSpenko,JiaCapel}. This generic guarantee does not supply a frame valid throughout our stated self-adjoint class, including proper cyclic modules and arbitrary seeds.

Pointed moments also define the series $w\mapsto v^{\mathsf T}w(H,K)v$, a linear representation of the form used for weighted automata. In the probabilistic setting, Tzeng's polynomial-time equivalence algorithm explores a representation and prunes linearly dependent vectors \cite{Tzeng}. The present frame instead supplies polynomial coordinates before the input is evaluated, under the stated self-adjointness and spectral-diameter conditions. The same redundant coordinates apply to proper cyclic modules and across changes in their dimension.

Short separating words for matrix tuples and their algorithmic selection are studied in invariant theory \cite{DM}. Trace-product constructions, ROABPs and explicit separating sets are directly linked by Forbes and Shpilka \cite{FS}. The present proof does not invoke a polynomial-size hitting set for general ROABPs. It uses real self-adjoint matrices with a bounded spectral diameter and a scaled differential residue. Removing self-adjointness would invalidate the Rayleigh-quotient step \eqref{eq:integergap}; the result should not be advertised as a derandomization for arbitrary complex matrix tuples.

Differential modules provide a standard framework \cite{vdPS}. The integrality of subbundle degree is already central to small-exponent constant-reduction arguments: Gontsov and Vyugin use a degree bound greater than $-1$ to force degree zero \cite[Lemma 2]{GV}. Barkatou and Gontsov also formulate reduction criteria using differences of formal exponents \cite[Theorem 2.3]{BG}. These criteria address Liouvillian solvability; their irregular cases require pairwise distinct eigenvalues of the leading matrices. Lemma~\ref{lem:rigidity} concerns every rational invariant subspace of the specified self-adjoint connection and permits repeated eigenvalues of $H$. Its application yields the explicit fixed-seed spanning recurrence and the circuit consequences. The folded-Wronskian step in Section~\ref{sec:cyclic} uses the established constructions of \cite{FS12,GK,FG}, followed by the single bounded-height specialization proved here.

\appendix
\section{A full-representation route using only McKay}\label{sec:fulltrace}
This section records an independent graph-theoretic fallback if the principal-moment formulation is not used. It has larger, but still polynomial, parameters.

Equip $M_n(\R)$ with the Frobenius inner product and seed $I$. Left multiplication by $A$ and $D$ gives symmetric operators in dimension $n^2\le N^2$; the latter has spectral diameter at most $N-1$. Theorem~\ref{thm:probes} with bound $N^2$ and $\kappa=N^4$ gives $O(N^4)$ polynomials of degree at most $N^2-1$ spanning $\langle A,D\rangle$.

The three Frobenius Gram tables
\[
\tr(U_i^*U_j),\quad\tr(U_i^*AU_j),\quad\tr(U_i^*DU_j)
\]
determine the pointed left-regular representation with seed $I$. Equality therefore implies equality of $\tr w(A,D)$ for every word $w$. For every fixed matrix polynomial $q$, equality also holds for $\tr(q^k)$ for every $k$. Newton identities therefore give the same characteristic polynomial for $q$ on the two inputs. McKay's theorem then distinguishes trees.

The factoring used in \eqref{eq:factored} applies to these trace tables as well. If $M'=O(N^4)$ is the number of probes and $b$ exceeds twice a uniform bound on the integer table entries, put $Y=\sum_jb^jU_j$, $Z=\sum_ib^{iM'}U_i$, and $W=1+b^{M'^2}a+b^{2M'^2}d$. Then
\[
p_N^{\mathrm{full}}=Z^*WY+Y^*WZ
\]
packs all three tables using $O(M')=O(N^4)$ gates and degree at most $2N^2-1$. All evaluations still use matrices of the original order; the Frobenius-space enlargement is only in the spanning proof. This construction needs no $J$ expression and has polynomial bit complexity. It uses the classical full-representation tree theorem and does not give the sharper linear-degree bound.

\section{Reproducibility and scope of computation}
The accompanying verification directory contains the fixed emitter \texttt{quadratic\_ad\_circuit.py}, the independent checks \texttt{verify\_current.py} and \texttt{verify\_exports.py}, and their recorded results. The emitter sees only $N$. Its exported example \texttt{p5.quadratic.circuit.json.gz} has 407 gates and output formal degree 13. At $N=1$ it returns the identity. The allowed instructions are identity, adjacency, degree, zero, addition, multiplication and integer scaling; adjoints are compiled by reversing products. An evaluator may skip arithmetic on zero entries without changing the emitted DAG.

The file \texttt{current\_checks.json} records syntax checks for $N=1,\ldots,15$, exact evaluation on all 14 nonisomorphic nonempty trees of order at most six with the common bound $N=6$, three $N=1,2$ boundary inputs, and 42 real symmetric integer-matrix pairs. The checks compare the recurrence with an independently generated word span or rooted-tree orbit count, verify the actual distance expression and balanced-radix decoding, and record integer fingerprints. Modular rank is used as a characteristic-zero lower bound and compared with an independently justified upper bound.

The file \texttt{export\_checks.json} records equality of the Python emitter with the actual Lean instruction arrays for $N=1,\ldots,15$. It also executes the actual division-free PSD factory for $N=1,\ldots,4$ on 11 tree inputs and 12 further symmetric matrix pairs. On trees, its values are compared with an independently computed fraction-free Gram-residual formula, and image, cyclicity and positivity are checked exactly. These are finite regressions, not universal proofs that two separately implemented emitters agree.

The Lean 4/mathlib proof-audit sources, source-to-statement table, pinned dependencies, build records and reproduction scripts are provided at \url{https://github.com/Lzp88/AD}. The project integrates the rational differential spanning proof, the literal allowed-gate compiler, and the previously audited principal-moment reconstruction input. The reconstruction interface is the actual finite-tree word-moment theorem of \cite[Theorem 5.4]{LuLi}; it constructs a graph isomorphism rather than assuming that a cyclic isometry is a permutation. Its proof is reused through the public \texttt{PrincipalMomentAudit} namespace.

The source-to-statement table names the top-level declarations and their full hypotheses. In particular, it distinguishes the actual spectral-diameter spanning theorem from its quadratic-form helpers, includes arbitrary prescribed seeds and proper cyclic modules, and connects the actual tree distance identity to the compiled single-trace classification theorem, including $N=1$. The resource proofs bound every stored instruction's formal degree and every integer register's height. The PSD endpoint concerns the complete fixed Bird/Cramer/sum-of-squares factory, with no supplied rank or successful specialization premise. The spider endpoint uses actual nonisomorphic trees and arbitrary real polynomial coefficients.

The Frobenius-space spanning and three-table results in Appendix~A are also checked. Its final tree-reconstruction proof in the formal project uses the audited principal-moment input, via the actual distance expression, whereas the independent written argument in Appendix~A uses McKay and Newton identities. The source map records this difference in proof dependencies; it does not label the formal reconstruction proof as independent of the pointed input.

The frozen build records source and imported-object hashes and rejects warnings, admitted proofs, project-local axioms and transitive axiom dependencies beyond \texttt{propext}, \texttt{Classical.choice} and \texttt{Quot.sound}. The package contains the complete local import closure, toolchain and dependency pins, a rebuild script, and the source-to-statement table. The records describe the checked source snapshot; finite experiments remain separate evidence.

\section*{AI methodology}
ChatGPT and OpenAI Codex helped develop parts of the supporting theory, including the formulation and refinement of intermediate results, and assisted with checking the proofs of most theoretical results. They also assisted with the literature review, drafting and editing, and the development and verification of Lean formalizations. The author takes full responsibility for the content of this paper.

\end{document}